\theoremstyle{plain}
\newtheorem{theorem}{Theorem}
\newtheorem{lemma}[theorem]{Lemma}
\newtheorem{prop}[theorem]{Proposition}
\newtheorem{proposition}[theorem]{Proposition}
\newtheorem{algorithm}[theorem]{Algorithm}
\theoremstyle{remark}
\newtheorem{remark}[theorem]{Remark}
\newtheorem{example}[theorem]{Example}
\numberwithin{theorem}{section}
\newcommand{\E}{\mathbb{E}}
\newcommand{\Q}{\mathbb{Q}}
\newcommand{\Z}{\mathbb{Z}}
\newcommand{\Lat}{\textnormal{Lat}}
\newcommand{\Tr}{\textnormal{Tr}}
\newcommand{\Aut}{\textnormal{Aut}}
\newcommand{\diag}{\textnormal{diag}}
\newcommand{\CL}{\textnormal{CL}}
\newcommand{\GL}{\textnormal{GL}}
\newcommand{\SO}{\textnormal{SO}}
\newcommand{\Kern}{\textnormal{Ker}}
\newcommand{\isom}{\cong}
\newcommand{\Mass}{\textnormal{Mass}}
\newcommand{\abs}[1]{\lvert #1 \rvert}
\newcommand{\fraka}{\mathfrak{a}}
\newcommand{\frakb}{\mathfrak{b}}
\newcommand{\calM}{\mathcal{M}}
\newcommand{\calS}{\mathcal{S}}
\newcommand{\frakp}{\mathfrak{p}}
\newcommand{\NN}{\mathfrak{N}}
\newcommand{\id}{\textnormal{id}}
\newcommand{\mykill}[1]{}
\begin{document}

\bibliographystyle{plain}

\title{Quaternary quadratic lattices  over number fields}
\author{Markus Kirschmer}
\email{markus.kirschmer@math.rwth-aachen.de}
\author{Gabriele Nebe}
\email{nebe@math.rwth-aachen.de}
\address{Lehrstuhl D f\"ur Mathematik, RWTH Aachen University, 52056 Aachen, Germany}

\begin{abstract}
We relate proper isometry classes of maximal lattices in a
totally definite quaternary quadratic space $(V,q)$ with trivial discriminant 
to certain equivalence classes of ideals in the quaternion algebra representing 
the Clifford invariant of $(V,q)$. 
This yields a good  algorithm to enumerate a system of representatives of proper 
isometry classes of lattices in genera of maximal lattices in $(V,q)$. 
\end{abstract}

\subjclass[2010]{11E20; 11E41; 11E12; 11R52}

\keywords{Quaternary quadratic forms, lattices over totally real fields, genera of lattices, 
	orders in quaternion algebras,
class numbers, classification algorithm}

\maketitle

\section{Introduction}

Small dimensional lattices over algebraic number fields $K$ 
have been related to ideals in \'etale $K$-algebras by various authors. 
In his Disquisitiones Arithmeticae Gau\ss \ 
relates proper isometry classes of binary lattices to ideal classes in 
quadratic extensions of $K$.
 For ternary quadratic 
 forms a similar relation between lattices and 
 quaternion orders has been investigated by Peters (\cite{Peters}) and Brzezinski (\cite{brzb}, \cite{brza})
 based on results from Eichler and Brandt, 
 for a functorial correspondence see Voight (\cite{Voight}). 
 
Quaternary lattices have been investigated by Ponomarev
(\cite{p1,p3,p2,p4}), who relates the
 proper isometry classes of lattices in a 
 quaternary quadratic space to certain equivalence classes of ideals
 in a quaternion algebra, where he is particularly interested in the case where $K=\Q $.
 The present paper generalises Ponomarev's results to arbitrary totally 
 real number fields $K$ and develops a fast algorithm to 
 enumerate proper isometry classes in certain genera of 
 quaternary lattices. 

 To state our results let $(V,q)$ be  a totally definite quaternary quadratic space over $K$ 
 of square discriminant and  let
 $Q$ be the totally definite quaternion algebra representing its Clifford invariant.
 Then Theorem \ref{main} shows that 
the proper isometry classes of $\fraka $-maximal lattices 
 in $(V,q)$ are in bijection with certain equivalence classes of normal ideals 
 in $Q$ of norm $\fraka $.  
 This correspondence is used 
 to relate  the  mass formulas 
 of Siegel and Eichler  in Section \ref{Siegel}.
 Section 7 develops an algorithm to enumerate
a system of representatives
 of proper isometry classes of $\fraka $-maximal lattices in $(V,q)$ 
    based on the
   method of \cite{KirschmerVoight}. Algorithm \ref{algo} 
   is much more efficient than the 
   usual Kneser neighbour method (see for instance \cite{ScharlauHemkemeier} 
   for a description of a good implementation of this method).
     This is illustrated in a small and a somewhat larger  example in the end of the paper. 
     A further application to the classification of binary Hermitian lattices is given in 
     \cite{BinaryHerm}. 

{\bf Acknowledgements} 
The authors thank the anonymous referee for many helpful comments largely improving the
exposition of the results. 
The research is supported by the DFG within the framework of the  SFB TRR 195.

\section{Quadratic lattices over number fields} \label{quadraticlat}

In this section, we set up basic notation for quadratic lattices.
Let $K$ be a  number field and let $(V,q)$ be a non-degenerate quadratic space over $K$. 
The most important invariants of $(V,q)$ are the Clifford invariant $c(V,q) $ as defined in \cite[Remark 2.12]{Scharlau} and 
the determinant $\det(V, q)$, which is the square class of the determinant of a Gram matrix of $(V,q)$.
The interest in these two isometry invariants of quadratic spaces is mainly due to the following classical result by  Helmut Hasse.

\begin{theorem}[\protect{\cite{hasse}}] \label{hasse}
Over a number field $K$ the isometry class of a quadratic space is uniquely determined by its dimension, its determinant, its Clifford invariant and its signature at all real places of $K$.
\end{theorem}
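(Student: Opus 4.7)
The plan is to reduce the global classification to local ones via the Hasse--Minkowski local-global principle, and then invoke the local classifications together with Albert--Brauer--Hasse--Noether to recover the Hasse invariants at the finite places from the (global) Clifford invariant.

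First, I would invoke the Hasse--Minkowski principle: two non-degenerate quadratic spaces over $K$ are isometric if and only if they are isometric over every completion $K_v$ of $K$. This reduces the problem to classifying quadratic spaces over local fields.

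Second, I would handle the completions separately. At a complex place, all non-degenerate quadratic spaces of the same dimension are isometric. At a real place, Sylvester's law of inertia shows that the isometry class is determined by the signature, which is part of the given data. At a non-archimedean place $v$, the classical local classification states that a non-degenerate quadratic space over $K_v$ is determined up to isometry by its dimension, its determinant class in $K_v^{*}/(K_v^{*})^{2}$, and its Hasse invariant $s_v(V,q)\in\{\pm 1\}$. The global determinant $\det(V,q)\in K^{*}/(K^{*})^{2}$ clearly controls the local determinant classes, so the only remaining task is to recover all the Hasse invariants from the given data.

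Third, I would use the standard formula (cf.\ \cite{Scharlau}) expressing the Clifford invariant $c(V,q)\in \textnormal{Br}(K)$ in terms of the Hasse invariants and the determinant: locally at $v$, the class $c(V,q)_v\in\textnormal{Br}(K_v)[2]$ is obtained from $s_v(V,q)$ by a correction factor depending only on $\dim V$ and on the local image of $\det(V,q)$ under the Hilbert symbol. Thus, for fixed dimension and determinant, the local Hasse invariants and the local components of the Clifford invariant determine each other. By Albert--Brauer--Hasse--Noether, a class in $\textnormal{Br}(K)$ is determined by its collection of local invariants, so specifying the global class $c(V,q)$ is equivalent to specifying all $s_v(V,q)$ at finite places (compatibly with the product formula).

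Combining the three steps: any two spaces of the same dimension, determinant, Clifford invariant and real signatures agree locally everywhere, hence are isometric by Hasse--Minkowski. The main obstacle is the bookkeeping with the Clifford/Hasse invariant correspondence, because the Clifford invariant is a single global Brauer class whereas Hasse invariants are attached to each place; this is resolved precisely by Albert--Brauer--Hasse--Noether together with the explicit local formula relating the two invariants. No consistency is lost because the product formula for Hasse invariants matches the reciprocity law $\sum_{v}\textnormal{inv}_{v}=0$ in $\textnormal{Br}(K)$.
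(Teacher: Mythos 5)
The paper does not prove this statement at all; it is quoted as a classical result of Hasse with a citation, so there is no internal proof to compare against. Your argument is the standard (and correct) one: reduce to the completions via the weak Hasse--Minkowski principle, use Sylvester at the real places and the dimension--determinant--Hasse-invariant classification at the finite places, and translate between the global Clifford invariant and the family of local Hasse invariants via the explicit correction formula (depending only on $\dim V \bmod 8$ and the determinant) together with the injectivity of $\textnormal{Br}(K)\to\prod_v \textnormal{Br}(K_v)$ from Albert--Brauer--Hasse--Noether. I see no gap; the only point worth making explicit is that passing from the Hasse--Minkowski theorem on isotropy to the local-global principle for \emph{isometry} uses Witt cancellation, but that is routine.
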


Let $\Z_K$ be the ring of integers in $K$. 
A $\Z_K$-lattice $L$ in $(V,q)$ is a finitely generated $\Z_K$-submodule of $V$ that 
contains a $K$-basis of $V$. 
The orthogonal group
$$O(V,q) 
 := \{ \varphi \in \GL (V) \mid 
q(\varphi (v) ) = q(v) \mbox{ for all } v\in V \} $$
and its normal subgroup
$\SO(V,q) := \{ \varphi \in O(V,q)\mid \det (\varphi ) = 1 \} $
of proper isometries act on the set of all lattices in $(V,q)$. 
We call two lattices $(L,q) $ and $(L',q)$ in $(V,q)$ {\em properly
isometric}, $(L,q) \cong ^+ (L',q) $, if they are in the same orbit under
the action of $\SO(V,q) $ and denote by
 $$[(L,q)]^+ = [L]^+ = \SO (V,q) \cdot L $$ the
proper isometry class of the $\Z_K$-lattice $L$.
The stabiliser of $(L,q)$ in $\SO(V,q)$ is 
called the proper isometry group $\Aut^+(L,q)$ of $(L,q)$.
If we refer to the coarser notion of isometry and orbits under the full orthogonal
group, then the superscript $\ ^+$ is omitted. 

 Certain invariants of a $\Z_K$-lattice $(L,q)$ can be read off from the 
 transfer to the corresponding $\Z $-lattice  $(L,\Tr(q))$, 
 where 
       $$\Tr(q) : L \to \Q , \ell \mapsto \Tr _{K/\Q }(q(\ell )) .$$
The $\Z $-lattice $(L,\Tr(q))$ is called the \emph{trace lattice} of $(L,q)$

Given a place $\frakp$ of $K$, let $K_\frakp$ and $V_\frakp:= V \otimes_K K_\frakp$ be the completions of $K$ and $V$ at $\frakp$.
If $\frakp$ is finite, we denote by $\Z_{K_\frakp}$ and $L_{\frakp } := L \otimes _{\Z_K} \Z_{K_{\frakp}} $ the completions of $\Z_K$ and $L$ at $\frakp $.
Two lattices $(L,q)$ and $(L', q)$ in $(V, q)$ are \emph{in the same genus}, 
if 
$$(L_{\frakp } , q) \cong (L'_{\frakp }, q) 
 \mbox{ for all maximal ideals } \frakp \mbox{ of } \Z _K  .$$

The classification of all (proper) isometry classes of lattices in a given genus 
is an interesting and intensively studied problem 
 (see \cite{ScharlauHemkemeier,Schiemann,Kirschmerhabil}). 
One strategy is to embed an integral quadratic lattice $(L,q)$ into a maximal 
one and deduce the classification of the genus of $(L,q)$ from the 
one of maximal lattices. 
Recall that for a (fractional) ideal ${\mathfrak a}$ of
$\Z_K$ a lattice $(L,q)$ in $(V,q)$ is ${\mathfrak a}$-{\em maximal},
if $q(L) \subseteq {\mathfrak a}$ and $q(L') \not\subseteq {\mathfrak a}$ for all $\Z_K$-lattices $(L', q)$ in $(V,q)$ with $L \subsetneq L'$.
The $\Z_K$-maximal lattices are also called {\em maximal}.
Locally, all $\fraka$-maximal lattices are isometric, see \cite[Theorem 91:2]{OMeara}.
Hence the set of all  ${\mathfrak a}$-maximal lattices in $(V, q)$ forms
 a single genus, which we denote by ${\mathcal G}_{{\mathfrak a}} (V,q) $. 

The number of isometry classes in a genus is always finite and it is called the class number of the genus.
By the strong approximation theorem, see for instance \cite[Theorem 104:4]{OMeara},
the class number of a genus can be determined by local invariants if there is an infinite place $\sigma$ of $K$ such that $(V_\sigma, q)$ is isotropic.
So the only interesting case is when $K$ is totally real and $(V_\sigma,q)$ is definite for all infinite places $\sigma$ of $K$.
After rescaling, we assume that $(V, q)$ is {\em totally positive definite}, which means that $(V_\sigma, q)$ is positive definite for all these $\sigma$.
An element $a$ of the totally real number field $K$ is called {\em totally positive}, if $\sigma(a) > 0$ for all  infinite places $\sigma$ of $K$.

\section{Some basic facts about quaternion algebras} 

This section relates normal ideals in quaternion algebras to maximal lattices. 
A detailed discussion of the arithmetic of quaternion algebras 
can be found in \cite{EichlerZT}, \cite{Vigneras}, and \cite{Reiner}.
Let $Q$ be a totally definite quaternion algebra over an algebraic number field $K$.
Then $K$ is totally real and $Q$ has a basis 
$(1,i,j,ij)$ with $ij=-ji $ and $i^2=-a$, $j^2=-b$ for some totally positive $a,b\in K$.
The algebra $Q$ is also denoted by $Q= \left(\frac{-a,-b}{K} \right)$. 
It carries a canonical involution, $\overline{\phantom{x}} \colon Q\to Q$ 
defined by $\overline{t+xi+yj+zij} = t-xi-yj-zij $.
The reduced norm
$$n \colon Q\to K, n(\alpha ) = \alpha \overline{\alpha}$$ 
of $Q$ is a quaternary positive definite quadratic form over $K$ such that
$n(\alpha \beta ) = n(\alpha ) n(\beta ) $ for all $\alpha ,\beta \in Q$. 
The group of proper isometries of the quadratic space $(Q, n)$ is
\begin{equation*}
\SO(Q,n) = \{ x\mapsto \alpha x\beta \mid \alpha ,\beta \in Q^*,\  n(\alpha ) n(\beta) = 1 \}
\end{equation*} 
(see e.g. \cite[Appendix IV, Proposition 3]{Dieudonne} or \cite[Proposition 4.3]{NebeQuat}). 

The canonical involution $\overline{\phantom{x}}$  of $Q$ 
is an improper isometry of 
$(Q,n)$, so the full orthogonal group $O(Q,n)$ is generated by the normal subgroup $\SO(Q,n)$ and the canonical involution $\overline{\phantom{x}}$.

\begin{remark} 
	The  Gram matrix of $(Q,n)$ with respect to the basis $(1,i,j,ij)$ from above is 
$\diag (1,a,b,ab) $. 
Hence the determinant of $(Q,n)$ is a square and its Clifford invariant 
can be computed with \cite[Formula (11.12)]{Kneser} as 
the class of $Q$ in the Brauer group of $K$. 
\end{remark}

An {\em order} in $Q$ is a $\Z_K$-lattice that is a subring of $Q$. 
An order ${\mathcal M}$ is called {\em maximal} if it is not contained in any other order.

\begin{proposition}\label{maximalmaximal}
If $\mathcal{M}$ is a maximal order in $Q$, then $(\mathcal{M}, n) \in \mathcal{G}_{\Z_K}(Q,n)$ is 
a maximal lattice.
\end{proposition}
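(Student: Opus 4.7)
The plan is to reduce the maximality question to a purely local one at each finite prime and then split into the split and ramified cases for $Q_\frakp$.

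First I would verify that $(\mathcal{M},n)$ is integral. Every $\alpha\in\mathcal{M}$ is integral over $\Z_K$, and it satisfies its reduced characteristic polynomial $X^{2}-\tr(\alpha)X+n(\alpha)$; hence $n(\alpha)\in\Z_K$ for all $\alpha\in\mathcal{M}$, i.e. $n(\mathcal{M})\subseteq\Z_K$. So $(\mathcal{M},n)$ is an integral $\Z_K$-lattice in $(Q,n)$.

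Next I would use that $\Z_K$-maximality of an integral lattice is a local property, so $(\mathcal{M},n)$ is maximal iff $(\mathcal{M}_{\frakp},n)$ is $\Z_{K_{\frakp}}$-maximal for every finite prime $\frakp$. Since localization sends maximal orders to maximal orders, the problem reduces to: if $\mathcal{M}_{\frakp}$ is a maximal order in $Q_{\frakp}$, then no $\Z_{K_{\frakp}}$-lattice $L\supsetneq\mathcal{M}_{\frakp}$ satisfies $n(L)\subseteq\Z_{K_{\frakp}}$. Here I would argue by contradiction and split into two cases according to the local structure of $Q_{\frakp}$. If $Q_{\frakp}$ is a division algebra, $\mathcal{M}_{\frakp}$ is the unique maximal order and coincides with the valuation ring $\{\alpha\in Q_{\frakp}:n(\alpha)\in\Z_{K_{\frakp}}\}$; any putative $\alpha\in L$ with $n(\alpha)\in\Z_{K_{\frakp}}$ already lies in $\mathcal{M}_{\frakp}$, ruling out strict overlattices. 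If $Q_{\frakp}\cong M_{2}(K_{\frakp})$ is split, conjugation lets me assume $\mathcal{M}_{\frakp}=M_{2}(\Z_{K_{\frakp}})$. Using $1\in L$, polarization
$$n(\alpha+\beta)-n(\alpha)-n(\beta)=\tr(\alpha\overline{\beta})\in\Z_{K_{\frakp}}\qquad(\alpha\in L,\ \beta\in\mathcal{M}_{\frakp})$$
forces $L$ into the dual lattice $\mathcal{M}_{\frakp}^{\#}$ relative to the trace bilinear form $b(x,y)=\tr(x\overline{y})$.

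A short direct check on matrix units then establishes that $M_{2}(\Z_{K_{\frakp}})$ is self-dual with respect to $b$, i.e. $b(E_{ij},E_{k\ell})=\delta_{i\ell}\delta_{jk}$. Combined with the above this yields $L\subseteq\mathcal{M}_{\frakp}^{\#}=\mathcal{M}_{\frakp}$, contradicting $L\supsetneq\mathcal{M}_{\frakp}$. Assembling the local statements at all $\frakp$ gives that $\mathcal{M}$ is a maximal lattice in $(Q,n)$ and hence belongs to $\mathcal{G}_{\Z_K}(Q,n)$.

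The main obstacle is the split case: one has to deploy polarization cleanly and then verify the self-duality of $M_{2}(\Z_{K_{\frakp}})$ for the trace bilinear form. Both steps are elementary but are where the argument actually does work. The ramified case is essentially immediate from the valuation-ring description of the unique maximal order. As a sanity check, one could alternatively compare the reduced discriminant of the maximal order $\mathcal{M}_{\frakp}$ with the lattice discriminant of $n$ restricted to it; however, the case-analysis approach is cleaner and directly gives the desired non-existence of strict overlattices.
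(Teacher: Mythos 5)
Your proof is correct and follows essentially the same route as the paper: reduce to the completions at each finite prime $\frakp$, use the valuation-ring description $\{x\in Q_{\frakp}\mid n(x)\in\Z_{K_{\frakp}}\}$ of the unique maximal order at ramified primes, and use unimodularity of the maximal order under the trace bilinear form at the remaining primes (which the paper simply cites from Reiner, Theorem 20.3, whereas you verify it on matrix units). One cosmetic remark: since $\overline{E_{k\ell}}$ is the adjugate, one actually gets $b(E_{12},E_{21})=-1$ rather than $+1$, but the Gram matrix in the basis of matrix units still has unit determinant, so the self-duality conclusion and the rest of your argument stand.
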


\begin{proof}
It is enough to show that for all prime ideals $\frakp $ of $\Z_K$ the completion 
$(\mathcal{M}_{\frakp } , n) $ is a maximal lattice in $(Q_{\frakp} , n)$. 
If $\frakp $ is not ramified, then $(\mathcal{M}_{\frakp } , n) $ is unimodular (see \cite[Theorem 20.3]{Reiner}) 
and if $\frakp $ is ramified in $Q$ then 
$\mathcal{M}_{\frakp } = \{ x\in  Q_{\frakp } \mid n(x) \in \Z _{K_{\frakp }} \} $ by \cite[Theorem 12.8]{Reiner}. 
In both cases the lattice $({\mathcal M}_{\frakp},n)$ is maximal.
\end{proof}

A $\Z_K $-lattice  $J$ in $Q$ is called 
{\em normal} if its \emph{right order}
$$O_{r }(J) := \{ \alpha \in Q \mid J\alpha  \subseteq J \} $$ is a maximal order in $Q$. 
Then also its \emph{left order}
$O_{\ell }(J) := \{ \alpha \in Q \mid \alpha J \subseteq J \} $ is maximal (see \cite[Theorem 21.2]{Reiner}) 
and $J$ is an invertible left (right) ideal of its left (right) order. 
Let $\mathcal{M}$ be a maximal order in $Q$. Then $J$ is called a \emph{two sided} ideal of ${\mathcal M}$, if $O_r(J) = O_{\ell }(J) = {\mathcal M}$.
The two sided ideals of $\mathcal{M}$ form an abelian group.
The \emph{normaliser} of ${\mathcal M}$ 
$$ N ({\mathcal M})  := \{ \alpha \in Q^* \mid \alpha  {\mathcal M} \alpha ^{-1} = {\mathcal M} \} $$
acts on this group by left multiplication. 
This action has finitely many orbits, the number of which is called the two sided 
ideal class number $H({\mathcal M})$ of ${\mathcal M}$.


\begin{remark}\label{improper}
	Any normal lattice $J_{\frakp }$ in the 
	completion $Q_{\frakp } = Q\otimes _K K_{\frakp}$ is free as a right $O_r(J_{\frakp})$-module and thus of the form 
	$\alpha  O_r(J_{\frakp}) $ for some $\alpha \in Q_{\frakp} ^*$. 
	The map 
\begin{align*}
	J_{\frakp} &\to J_{\frakp} \\
	\gamma &\mapsto \alpha  \overline{\gamma} \, \overline{\alpha}^{-1} 
\end{align*} 
	is an improper isometry of $(J_{\frakp} ,n)$.
\end{remark}

We call two normal lattices $I,J$ left, right, respectively two sided {\em equivalent}, 
if there are $\alpha ,\beta \in Q^*$ such that $I=\alpha J$, $I=J\beta  $, respectively $I=\alpha J\beta  $. 
We denote by 
$$C(J) := \{ \alpha J\beta  \mid \alpha ,\beta \in Q^* \} $$
the two sided equivalence class of the normal lattice $J$.

\begin{prop}\label{7.1}
  Let $I,J$ be normal lattices in $Q$.
\begin{enumerate}
  \item If $I$ and $J$ are two sided equivalent, then $O_r(I)$ and $O_r(J)$ are conjugate.
  \item Suppose $O_r(I) = O_r(J)$. Then $I$ and $J$ are two sided equivalent if and only if there exists some $\beta \in  N(O_r(I)) $ such that $I\beta $ is left equivalent to $J $. 
\end{enumerate}
\end{prop}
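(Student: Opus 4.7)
The plan is to derive both parts from a single direct computation of the right order of a two-sided multiple $\alpha J \beta$.

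For part (1), I would first note that left multiplication by an element of $Q^*$ has no effect on the right order, since
\[
O_r(\alpha L) = \{x \in Q \mid \alpha L x \subseteq \alpha L\} = \{x \in Q \mid Lx \subseteq L\} = O_r(L).
\]
Hence it suffices to compute $O_r(J\beta)$. Unwinding the definition,
\[
O_r(J\beta) = \{x \in Q \mid J\beta x \subseteq J\beta\} = \{x \in Q \mid \beta x \beta^{-1} \in O_r(J)\} = \beta^{-1} O_r(J)\beta.
\]
Combining these gives $O_r(\alpha J \beta) = \beta^{-1} O_r(J) \beta$, which proves (1).

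For part (2), the easy direction is ($\Leftarrow$): if $I\beta$ is left equivalent to $J$ for some $\beta \in N(O_r(I))$, say $I\beta = \gamma J$ with $\gamma \in Q^*$, then $I = \gamma J \beta^{-1}$ shows that $I$ and $J$ are two-sided equivalent. For the forward direction I assume $O_r(I) = O_r(J) =: \mathcal{M}$ and write $I = \alpha J \beta$. Feeding this into the formula from part (1) yields $\mathcal{M} = O_r(I) = \beta^{-1} \mathcal{M} \beta$, so $\beta \in N(\mathcal{M})$, and therefore $\beta^{-1} \in N(\mathcal{M})$ as well. Then $I\beta^{-1} = \alpha J$ exhibits $I\beta^{-1}$ as left equivalent to $J$ via the element $\beta^{-1} \in N(O_r(I))$.

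The only genuine computation is the identity $O_r(J\beta) = \beta^{-1}O_r(J)\beta$; the rest is bookkeeping, using that the normaliser is a group. I do not expect a serious obstacle here, as the argument is essentially a matter of tracking where $\beta$ appears on each side.
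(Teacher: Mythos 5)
Your proposal is correct and follows essentially the same route as the paper: both rest on the identity $O_r(\alpha J\beta)=\beta^{-1}O_r(J)\beta$, from which part (1) is immediate and part (2) follows by noting that $\beta$ must normalise the common right order. You merely spell out the computation of $O_r(\alpha J\beta)$ in more detail than the paper does.
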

\begin{proof}
Suppose $I$ and $J$ are two sided equivalent. Then there exist $\alpha, \beta \in Q^*$ such that $\alpha I\beta  = J$.
Then $O_r(J) = O_r(\alpha I \beta) = \beta^{-1} O_r(I) \beta$ is conjugate to $O_r(I)$.
This shows the first assertion. Moreover, if $O_r(I) = O_r(J)$, then $\beta \in {N}(O_r(I))$. 
The converse of the second assertion is clear.
\end{proof}

The {\em norm} $n(J)$ of a lattice $J$ is the fractional ideal of $\Z_K$ generated by the norms of the
elements in $J$, 
$$n(J) := \sum _{\gamma \in J} \Z_K n(\gamma ) . $$
Clearly $n(\alpha J\beta ) = n(\alpha )n(\beta ) n(J) $ so the norm gives a well defined 
map 
$$\{ C(J) \mid J \mbox{ a normal lattice in } Q \} \to \CL^+(K), C(J) \mapsto [n(J)] $$  
from the set of equivalence classes of normal lattices in $Q$ into the 
narrow class group $\CL^+(K)$ of $K$. 

Let ${\mathfrak a} $ be a fractional ideal  of $\Z_K$.
We call a normal lattice $J$ in $Q$
of {\em type} $[{\mathfrak a}]$ if $[ n(J) ] = [{\mathfrak a}] $. 
This generalises the notion of stably free ideals,
which are the  normal lattices  of type $[\Z_K]$, see \cite[Section 35]{Reiner}.

\begin{proposition}\label{normalamaximal}
Let $J$ be a $\Z_K$-lattice in $Q$ and let $\fraka$ be a fractional ideal of $\Z_K$.
Then $J$ is a normal lattice in $Q$ with $n(J) = \fraka$ if and only if $(J, n)$ lies in $\mathcal{G}_\fraka(Q,n)$.
\end{proposition}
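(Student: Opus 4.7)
The approach is to localise. All three conditions—normality of $J$ (i.e.\ maximality of $O_r(J)$), the equality $n(J)=\fraka$, and membership in $\calG_\fraka(Q,n)$—are local at every finite prime $\frakp$ of $\Z_K$, so the plan is to fix $\frakp$ and prove the equivalence in $(Q_\frakp,n)$: namely, that ``$J_\frakp$ is normal with $n(J_\frakp)=\fraka_\frakp$'' is equivalent to ``$(J_\frakp,n)$ is $\fraka_\frakp$-maximal.''

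For the ``$\Rightarrow$'' direction, I would invoke the structure result of Remark \ref{improper} to write $J_\frakp=\alpha\calM_\frakp$ for a maximal order $\calM_\frakp$ of $Q_\frakp$ and some $\alpha\in Q_\frakp^*$; then $n(J_\frakp)=n(\alpha)\Z_{K_\frakp}=\fraka_\frakp$. Left multiplication by $\alpha$ is a similitude $(\calM_\frakp,n)\to(J_\frakp,n)$ with multiplier $n(\alpha)$, and it is immediate from the definition that a similitude with multiplier $c$ carries a $\frakb$-maximal lattice to a $c\frakb$-maximal one. Since $(\calM_\frakp,n)$ is $\Z_{K_\frakp}$-maximal by Proposition \ref{maximalmaximal}, it follows that $(J_\frakp,n)$ is $\fraka_\frakp$-maximal.

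For the ``$\Leftarrow$'' direction, I would pick any maximal order $\calM_\frakp$ in $Q_\frakp$ and any $\alpha\in Q_\frakp^*$ with $n(\alpha)\Z_{K_\frakp}=\fraka_\frakp$ (such $\alpha$ exists by surjectivity of the local reduced norm). By the previous paragraph, $\alpha\calM_\frakp$ is $\fraka_\frakp$-maximal, and by the local uniqueness of $\fraka_\frakp$-maximal lattices (\cite[Theorem 91:2]{OMeara}) there exists $\varphi\in O(Q_\frakp,n)$ with $\varphi(\alpha\calM_\frakp)=J_\frakp$. Using the description of $\SO(Q_\frakp,n)$ recalled in Section 3 together with the fact that $O(Q_\frakp,n)$ is generated by $\SO(Q_\frakp,n)$ and the canonical involution, $\varphi$ must have one of the forms $x\mapsto\beta x\gamma$ or $x\mapsto\beta\bar x\gamma$ with $\beta,\gamma\in Q_\frakp^*$. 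In the first case $J_\frakp=\beta\alpha\calM_\frakp\gamma$ has right order $\gamma^{-1}\calM_\frakp\gamma$; in the second $J_\frakp=\beta\overline{\calM_\frakp}\bar\alpha\gamma$ has right order $\gamma^{-1}\bar\alpha^{-1}\overline{\calM_\frakp}\bar\alpha\gamma$. Both are conjugates of maximal orders and hence maximal, so $J$ is normal; and $n(J_\frakp)=n(\varphi(\alpha\calM_\frakp))=n(\alpha\calM_\frakp)=\fraka_\frakp$ because isometries preserve the norm ideal.

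The step I expect to require the most care is precisely this last case analysis: without the improper half of $O(Q_\frakp,n)$, i.e.\ without using that the canonical involution is an isometry, one cannot force $\varphi$ into the sandwich form needed to read off a maximal right order. This is exactly why Remark \ref{improper} sits where it does.
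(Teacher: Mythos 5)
Your proof is correct and follows essentially the same route as the paper's: both localise at each finite prime, obtain the forward direction from Proposition \ref{maximalmaximal} together with the local freeness of normal lattices (Remark \ref{improper}), and obtain the converse from the local uniqueness of $\fraka_\frakp$-maximal lattices combined with the sandwich description of the isometries of $(Q_\frakp,n)$. The only differences are presentational: the paper argues the forward direction by contradiction rather than via your similitude observation, and in the converse it first upgrades the local isometry to a proper one (which is possible by Remark \ref{improper}), so that only the case $x\mapsto\beta x\gamma$ arises, whereas you treat the improper case explicitly via the canonical involution.
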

\begin{proof}
  Suppose first that $J$ is a normal lattice in $Q$ with $n(J) = \fraka$. 
  Then the right order $\mathcal{O}$ of $J$ is maximal.
  As in the proof of Proposition \ref{maximalmaximal} we pass to the completions and let 
  $\frakp $ be a maximal ideal of $\Z_K$. 
  As $J$ is locally free (see Remark \ref{improper}), there 
  exists $x_\frakp \in Q_\frakp^*$ such that $x_\frakp J_\frakp = \mathcal{O}_\frakp$.
  Assume that $(J_\frakp, n)$ is not $\fraka_\frakp$-maximal.
  Then there exists $y_\frakp \in Q_\frakp \setminus J_\frakp$ such that $n(J_\frakp + y_\frakp \Z_{K_\frakp}) \subseteq \fraka_{\frakp}$.
  But then $n(\mathcal{O}_\frakp + x_\frakp^{-1} y_\frakp \Z_{K_\frakp}) \subseteq \Z_{K_\frakp}$.
  This contradicts  Proposition \ref{maximalmaximal}. \\
  Suppose now that $(J, n)$ is an $\fraka$-maximal lattice in $(Q,n)$.
  Let $\mathcal{M}$ be some maximal order in $Q$.
  For each maximal ideal $\frakp$ of $\Z_K$ there exists some $z_\frakp \in Q_\frakp^*$ such that $n(z_\frakp) \Z_{K_{\frakp}}  = \fraka_{\frakp}^{-1}$.
  Then $(z_\frakp J_\frakp, n)$ is $\Z_{K_\frakp}$-maximal and by Proposition \ref{maximalmaximal}
  properly isometric to $(\mathcal{M}_\frakp, n)$.
  So there exist  $\alpha_\frakp, \beta_\frakp \in Q_\frakp^*$ with $n(\alpha_\frakp) = n(\beta_\frakp)$ such that $\alpha_\frakp  \mathcal{M}_\frakp \beta_\frakp^{-1} = z_{\frakp} J_\frakp$.
  Hence $ O_r(J_{\frakp}) = 
  O_r(  z_{\frakp}^{-1} \alpha_\frakp  \mathcal{M}_\frakp \beta_\frakp^{-1}) = \beta_\frakp \mathcal{M}_\frakp \beta_\frakp^{-1}$ is maximal.
  Thus $J$ is normal. As $n(\mathcal{M}_\frakp) = \Z_{K_{\frakp}} $ we conclude that 
  $n(J_{\frakp} ) = \fraka _{\frakp} $ for all maximal ideals $\frakp$, so $n(J) = \fraka $.
\end{proof}

 Let $J$ be a normal lattice of type $ [{\mathfrak a}]$. 
 Then $n(J) = a {\mathfrak a}$ for some
totally positive $a \in K$. 
By the theorem of Hasse-Schilling-Maass, there 
is some $\alpha \in Q$ such that $n(\alpha ) = \frac{1}{a }$. 
Then $n(\alpha J) = {\mathfrak a}$. 
So any  two sided equivalence 
class of type $[{\mathfrak a}]$  is represented by 
some normal lattice $J$ with $n(J) = {\mathfrak a}$. 
We call such a representative ${\mathfrak a}$-{\em normalised}.
Then the set of all ${\mathfrak a}$-normalised representatives of $C(J)$ is
$$
C_{\fraka }(J) := \{ \alpha  J \beta^{-1}  \mid \alpha ,\beta \in Q^*, n(\alpha )n(\beta^{-1} )\in\Z_K^* \},$$ 
the orbit of $J$ under the action of 
$\{ (\alpha , \beta ) \in Q^* \times Q^* \mid n(\alpha )  n(\beta^{-1} ) \in \Z_K^* \} . $
Let 
\begin{equation}\label{eq:NN}
\NN (J) := \{ ( \alpha , \beta ) \in  N(O_{\ell}(J)) \times  N(O_r(J)) \mid n(\alpha ) n(\beta^{-1} ) \in  \Z_K^* \} .
\end{equation}
Clearly $\NN(J)$ only depends on the left and right order of $J$.

\begin{lemma} \label{normaliserequal}
Let $J$ be a normal lattice and let $\alpha ,\beta  \in Q^*$. 
Then $\alpha J\beta^{-1} =J$ if and only if  $(\alpha,\beta ) \in \NN (J) $.
\end{lemma}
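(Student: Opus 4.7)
The plan is to treat the two implications separately. For the forward direction, assume $\alpha J \beta^{-1} = J$. Using the general identities
$$O_\ell(\alpha J \beta^{-1}) = \alpha O_\ell(J)\alpha^{-1} \quad \text{and} \quad O_r(\alpha J \beta^{-1}) = \beta O_r(J) \beta^{-1},$$
the equality of the two lattices immediately forces $\alpha \in N(O_\ell(J))$ and $\beta \in N(O_r(J))$. Applying the reduced norm, viewed as fractional ideals of $\Z_K$, to both sides yields $n(\alpha) n(\beta)^{-1} n(J) = n(J)$, and since $n(J)$ is invertible we obtain $n(\alpha)n(\beta)^{-1} \in \Z_K^*$, i.e.\ $(\alpha,\beta) \in \NN(J)$.

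For the converse, let $(\alpha, \beta) \in \NN(J)$ and set $I := \alpha J \beta^{-1}$. The normaliser conditions give $O_\ell(I) = O_\ell(J)$ and $O_r(I) = O_r(J)$, so it suffices to prove $I_\frakp = J_\frakp$ at every maximal ideal $\frakp$ of $\Z_K$. By Remark \ref{improper}, $J_\frakp = \gamma O_r(J)_\frakp$ for some $\gamma \in Q_\frakp^*$, whence $O_\ell(J)_\frakp = \gamma O_r(J)_\frakp \gamma^{-1}$. Using $\beta \in N(O_r(J)_\frakp)$,
$$I_\frakp = \alpha \gamma O_r(J)_\frakp \beta^{-1} = \gamma \bigl(\gamma^{-1}\alpha \gamma \beta^{-1}\bigr) O_r(J)_\frakp,$$
so that $I_\frakp = J_\frakp$ is equivalent to $x := \gamma^{-1}\alpha\gamma\beta^{-1} \in O_r(J)_\frakp^*$. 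Conjugation by $\gamma^{-1}$ sends $N(O_\ell(J)_\frakp)$ onto $N(O_r(J)_\frakp)$, so $\gamma^{-1}\alpha\gamma \in N(O_r(J)_\frakp)$; combined with $\beta \in N(O_r(J)_\frakp)$ this gives $x \in N(O_r(J)_\frakp)$, while $n(x) = n(\alpha)n(\beta)^{-1} \in \Z_K^*$ by hypothesis.

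The crux of the argument, which I would single out as the main obstacle, is the local fact that an element of the normaliser of a maximal $\Z_{K_\frakp}$-order $\mathcal{M}_\frakp \subset Q_\frakp$ whose reduced norm lies in $\Z_{K_\frakp}^*$ must itself lie in $\mathcal{M}_\frakp^*$. This follows from the standard description of $N(\mathcal{M}_\frakp)/\mathcal{M}_\frakp^*$: it is infinite cyclic, generated in the split case by the class of a uniformiser of $K_\frakp$ (whose reduced norm has $\frakp$-valuation $2$), and in the ramified case by a uniformiser of the unique maximal two-sided ideal of $\mathcal{M}_\frakp$ (whose reduced norm has $\frakp$-valuation $1$); in either case a non-trivial coset is detected by the $\frakp$-valuation of the reduced norm. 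Applying this to $x$ we conclude $x \in O_r(J)_\frakp^*$, hence $I_\frakp = J_\frakp$ for every $\frakp$, and therefore $I = J$.
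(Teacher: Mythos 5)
Your proof is correct, but in the converse direction it takes a genuinely different route from the paper. The forward implication is identical to the paper's (compare right orders and norms of $\alpha J\beta^{-1}$ and $J$). For the converse, the paper stays global: it forms the ideal $I := J^{-1}\alpha J\beta^{-1}$, checks that it is a two sided ideal of $O_r(J)$ of norm $\Z_K$, and invokes the structure theorem for the group of two sided ideals of a maximal order (free abelian on the maximal two sided ideals, one per prime, \cite[Theorems 22.4 and 22.10]{Reiner}) to conclude $I = O_r(J)$ and hence $\alpha J\beta^{-1} = JI = J$. You avoid the inverse ideal $J^{-1}$ altogether and instead localise, using the local freeness of $J_\frakp$ from Remark \ref{improper} to reduce everything to the statement that an element of $N(O_r(J)_\frakp)$ with reduced norm in $\Z_{K_\frakp}^*$ is a unit of $O_r(J)_\frakp$, which you correctly justify by the valuation of the reduced norm on $N(\mathcal{M}_\frakp)/\mathcal{M}_\frakp^*$ in both the split and ramified cases. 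The two arguments rest on the same local fact packaged differently: the paper's version is shorter once the cited structure theorem is granted, while yours is essentially self-contained and makes the valuation-theoretic mechanism explicit; the small price is that you must verify the compatibility of normalisers and left/right orders with completion, which you do.
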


\begin{proof}
Suppose first that $\alpha J\beta^{-1} =J$. Then $O_r(J) = O_r(\alpha J\beta^{-1} ) = \beta  O_r(J) \beta ^{-1}$ and hence $\beta  \in   N(O_r(J)) $.
Similarly $\alpha \in  N(O_{\ell }(J))$. 
Moreover $n(J) = n(\alpha J\beta^{-1} ) = n(\alpha )n(\beta^{-1} ) n(J) $ implies that $n(\alpha )n(\beta^{-1} ) \in \Z_K^* $. 
\\
Suppose now $(\alpha,\beta ) \in \NN (J)$ and consider the ideal $I:=J^{-1}\alpha J\beta^{-1} $. 
  Then $O_{\ell} (I) = O_{\ell}(J^{-1}) = O_r(J )$ and $ O_r(I) = O_r (J \beta^{-1} ) = O_r(J)$ as $\beta \in {N}(O_r(J))$.
Hence $I$ is a two sided ideal of $O_r(J)$. 
    As $\alpha \in {N}(O_{\ell}(J)) $ and $O_{\ell}(J) = O_r(J^{-1})$, we have $n(I) = n(J^{-1}) n(\alpha) n(J) n (\beta^{-1}) = \Z_K $. 
    For every maximal ideal $\frakp$ of $\Z_K$ there exists a unique maximal two sided ideal of $\mathcal{M}$ containing $\frakp \mathcal{M}$ and these freely generate the group of all two sided ideals of $\mathcal{M}$, see \cite[Theorems 22.4 and 22.10]{Reiner}.
  So $I$ being a two sided ideal of $O_r(J) $ of norm $\Z_K$ implies that $I = O_r(J)$ and hence 
\[ J=JO_r(J) = J I = J J^{-1} \alpha J\beta^{-1}  = \alpha  J \beta^{-1} . \qedhere \]
\end{proof}

For a normal lattice  $J$ we set
$$U(J) := \{ n(\alpha )n(\beta ^{-1}) \mid (\alpha , \beta ) \in \NN (J) \} .$$
This is a subgroup of $\Z_K^*$ and since the norm of an element in $Q^*$ is always totally positive,
$U(J)$ is a subgroup of the group $\Z_{K,>0}^*$ of totally positive units of $\Z_K$.
It always contains $(\Z_K^*)^2 = \{ n(u) \mid u \in \Z_K^*\}$.

For each coset $u \in \Z_{K,>0}^*/U(J)$, we choose an element  $\alpha _u \in Q$ such that $n(\alpha _u) \in u$.

\begin{prop}\label{latJ}
Let $J$ be a normal lattice in $Q$ with $n(J) = \fraka $.
 A system of representatives of all proper isometry classes of lattices 
 $(I,n)$ where $I\in C_{\fraka }(J) $ is
$$\Lat(J) := 
\{ (\alpha _uJ,n) \mid u\in \Z_{K,>0}^*/U(J) \}  \:. $$
Moreover, 
  \[ \Aut^+(\alpha_u J, n) = \alpha_u \Aut^+(J,n) \alpha_u^{-1} \]
where 
$$\Aut^+ (J,n) = \{ \gamma \mapsto \alpha \gamma \beta^{-1}  \mid  
(\alpha , \beta ) \in \NN (J)  , n(\alpha ) = n(\beta )  \} .$$
\end{prop}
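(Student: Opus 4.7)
The plan is to exploit the parametrisation
$\SO(Q,n)=\{x\mapsto\alpha x\beta\mid \alpha,\beta\in Q^*,\ n(\alpha)n(\beta)=1\}$
recorded in Section 3 throughout, so that every proper isometry between lattices in $C_\fraka(J)$ becomes a sandwich relation $\gamma I\delta=I'$, whose failure to be the identity can be measured via Lemma \ref{normaliserequal}. I would first handle the formula for $\Aut^+(J,n)$: by definition $\phi\in\Aut^+(J,n)$ means $\phi\in\SO(Q,n)$ with $\phi(J)=J$; writing $\phi(x)=\alpha x\beta^{-1}$ with $n(\alpha)=n(\beta)$, the condition $\alpha J\beta^{-1}=J$ is equivalent to $(\alpha,\beta)\in\NN(J)$ by Lemma \ref{normaliserequal}. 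The conjugation identity $\Aut^+(\alpha_u J,n)=\alpha_u\Aut^+(J,n)\alpha_u^{-1}$ then follows by transporting automorphisms across the set bijection $x\mapsto\alpha_u x$ from $J$ to $\alpha_u J$: conjugating $\phi(x)=\alpha x\beta^{-1}$ yields $y\mapsto(\alpha_u\alpha\alpha_u^{-1})y\beta^{-1}$, which still has norm product one and now stabilises $\alpha_u J$, and the assignment is clearly a group isomorphism.

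The substantive content is the assertion that $\Lat(J)$ is a system of representatives. For surjectivity, take $I=\alpha J\beta^{-1}\in C_\fraka(J)$; since the reduced norm on a totally definite quaternion algebra is totally positive, $c:=n(\alpha)n(\beta^{-1})$ lies in $\Z_{K,>0}^*$. The map $x\mapsto(\alpha\beta^{-1}\alpha^{-1})x\beta$ has norm product one and carries $I$ onto $(\alpha\beta^{-1})J$, so one may assume $I=\alpha'J$ with $n(\alpha')=c$. Letting $u$ be the coset of $c$ in $\Z_{K,>0}^*/U(J)$, write $n(\alpha_u)=c\cdot v$ with $v=n(\alpha_0)n(\beta_0^{-1})$ for some $(\alpha_0,\beta_0)\in\NN(J)$; by Lemma \ref{normaliserequal} this gives $\alpha_0^{-1}J\beta_0=J$, and a direct norm computation shows that $x\mapsto\alpha_u\alpha_0^{-1}(\alpha')^{-1}x\beta_0$ is a proper isometry sending $\alpha'J$ to $\alpha_u J$.

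For injectivity, suppose $\gamma\alpha_{u_1}J\delta=\alpha_{u_2}J$ with $n(\gamma)n(\delta)=1$. Rearranging gives $(\alpha_{u_2}^{-1}\gamma\alpha_{u_1})J(\delta^{-1})^{-1}=J$, so Lemma \ref{normaliserequal} places the pair $(\alpha_{u_2}^{-1}\gamma\alpha_{u_1},\delta^{-1})$ in $\NN(J)$; its associated norm ratio simplifies to $n(\alpha_{u_1})/n(\alpha_{u_2})\in U(J)$, forcing $u_1=u_2$. The main obstacle is the surjectivity step, where one must produce an explicit proper isometry whose norm product is one while simultaneously cancelling $\beta$ and absorbing the unit ambiguity coming from $U(J)$ through a careful choice of the pair $(\alpha_0,\beta_0)$. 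Once Lemma \ref{normaliserequal} is available, however, the entire argument reduces to bookkeeping with reduced norms.
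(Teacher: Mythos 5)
Your proposal is correct and follows essentially the same route as the paper: both arguments reduce everything to Lemma \ref{normaliserequal} together with bookkeeping of reduced norms, with your two-step surjectivity reduction (first to $\alpha'J$, then adjusting by $(\alpha_0,\beta_0)\in\NN(J)$) composing to exactly the single isometry $x\mapsto\alpha_u\alpha'\alpha^{-1}x\beta\beta'^{-1}$ used in the paper. The only difference is that you spell out the $\Aut^+$ statements, which the paper leaves implicit.
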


\begin{proof}
Let $I \in C_{\fraka} (J)$. Then there are $\alpha, \beta \in Q^*$ 
with $n(\alpha )n(\beta^{-1} ) \in \Z_K^*$ 
such that $I = \alpha J\beta ^{-1} $. 
Let $u := n(\alpha )n(\beta^{-1} ) U(J) \in \Z _{K,>0}^*/U(J)$.  
By definition there are $\alpha ' \in N(O_{\ell }(J)) , \beta' \in  N(O_r(J)) $  
such that $n(\alpha _u)  n(\alpha ' )n(\beta'^{-1}) = n(\alpha )n(\beta ^{-1}) $. 
As $n(\alpha_u)$ and $ n(\alpha )n(\beta ^{-1}) $ lie in $\Z_K^*$ we also have $n(\alpha ' )n(\beta'^{-1})\in \Z_K^*$.
So $(\alpha' , \beta ') \in \NN (J)$ and thus $\alpha ' J\beta'^{-1}=J$ by Lemma \ref{normaliserequal}.
Moreover $n(\alpha _u\alpha '  \alpha ^{-1}) = n(\beta  \beta'^{-1}) $, hence 
\[ (I,n)  = (\alpha J\beta ^{-1} ,n) \cong^+ (\alpha _u \alpha '  \alpha ^{-1} (\alpha J\beta ^{-1} ) \beta \beta'^{-1},n ) = (\alpha _u \alpha ' J\beta'^{-1},n ) = (\alpha _u J ,n ). \]
It remains to shows that two different elements in $\Lat(J)$ do not represent the same proper isometry class.
To this end let $(\alpha _uJ,n)$ and $(\alpha _vJ,n)$ be  properly isometric elements of $\Lat(J)$.
Then there are $\alpha ,\beta \in Q^*$ such that  $n(\alpha )n(\beta^{-1} ) =1$ and
$\alpha  \alpha _u J \beta^{-1}  = \alpha _v J $. Then $(\alpha _v^{-1} \alpha  \alpha _u ) J \beta^{-1}  = J $. 
Lemma \ref{normaliserequal} shows that $ \alpha _v^{-1} \alpha \alpha_u \in N (O_{\ell }(J)) $ and $ \beta  \in N(O_r(J))$.
Moreover, $n(\alpha _v^{-1}) n(\alpha _u) = n(\alpha _v^{-1} \alpha \alpha_u)n(\beta^{-1} ) \in U(J)$. So $u = v$. 
\end{proof}


\section{Quaternary lattices} 

In this section we summarise the results of the previous section in the context of 
a totally positive definite 
quadratic space $(V,q)$ of dimension 4 over some totally real number field $K$.
 To apply the theory of the previous section, we assume that  $\det(V,q)$ is a square in $K^*$. 
 Then the Clifford invariant $c(V,q) = [Q]$ is the class of a totally
 definite quaternion algebra $Q$ in the Brauer group of $K$  and by Theorem \ref{hasse} we have 
 that $$(V,q) \cong (Q,n) .$$ 
So without loss of generality, we may assume that $(V,q) = (Q,n)$.
 If $K=\Q $ it is shown in \cite{p3} that
 the proper isometry classes of lattices in the 
 genus ${\mathcal G}_{\Z} (Q,n) $ of maximal lattices in $(Q,n)$ 
 correspond to two sided equivalence classes of normal lattices $J$ in $Q$.
To extend this correspondence 
to our more general situation
let ${\mathfrak a}$ be a fractional ideal of $\Z_K$ and 
choose ${\fraka} $-normalised lattices $J_1,\ldots , J_k$ in $Q$ such that the disjoint union
$$\bigsqcup _{i=1}^k C_{\fraka} (J_i) $$
is the set of all ${\fraka }$-normalised normal lattices in $Q $.
The easiest way to see that $k$ is finite is the combination of the following theorem and the finiteness of class numbers of genera.

\begin{theorem}\label{main}
$\bigsqcup _{i=1}^k \Lat(J_i) $ is a system of representatives 
	of the proper isometry classes of lattices in 
	${\mathcal G}_{{\mathfrak a}} (Q,n) $.
\end{theorem}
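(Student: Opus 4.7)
\emph{Proof plan.} The plan is to reduce Theorem \ref{main} to a bookkeeping combination of Propositions \ref{normalamaximal} and \ref{latJ}. The first identifies $\mathcal{G}_\fraka(Q,n)$ with the set of normal lattices of reduced norm $\fraka$, while the second parametrises the proper isometry classes inside a single two-sided equivalence class of $\fraka$-normalised lattices. Since by construction the sets $C_\fraka(J_1), \ldots, C_\fraka(J_k)$ partition the family of $\fraka$-normalised normal lattices in $Q$, the theorem should follow by checking coverage and non-redundancy separately.

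For coverage, I would take an arbitrary $(L, n) \in \mathcal{G}_\fraka(Q,n)$. Proposition \ref{normalamaximal} turns $L$ into a normal lattice with $n(L) = \fraka$, so $L$ is $\fraka$-normalised and hence lies in exactly one class $C_\fraka(J_i)$. Proposition \ref{latJ} then supplies a properly isometric representative of $L$ inside $\Lat(J_i)$.

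For non-redundancy, disjointness within a single $\Lat(J_i)$ is the last statement of Proposition \ref{latJ}. In the cross-class case $i \neq j$, I would assume a proper isometry $\alpha_u J_i \cong^+ \alpha_v J_j$ and use the explicit form $\SO(Q,n) = \{x \mapsto \alpha x \beta \mid n(\alpha)n(\beta) = 1\}$ recalled in Section 3 to find $\alpha, \beta \in Q^*$ with $\alpha \alpha_u J_i \beta = \alpha_v J_j$; the rewriting $J_j = (\alpha_v^{-1}\alpha\alpha_u)\, J_i\, \beta$ places $J_j$ in $C(J_i)$, contradicting the choice of distinct two-sided equivalence classes. I expect no real obstacle: the substantive analysis is already encoded in Propositions \ref{normalamaximal} and \ref{latJ}, and the only subtle point — reconciling two-sided equivalence of normal lattices with proper isometry of the underlying quadratic lattices — is precisely what the $U(J)$-coset representatives $\alpha_u$ were designed to handle.
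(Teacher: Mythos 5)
Your proposal is correct and follows essentially the same route as the paper: Proposition \ref{normalamaximal} identifies lattices in ${\mathcal G}_{\fraka}(Q,n)$ with $\fraka$-normalised normal lattices, the partition into the classes $C_{\fraka}(J_i)$ pins down a unique index, and Proposition \ref{latJ} supplies existence and uniqueness of a representative within each $\Lat(J_i)$. Your explicit cross-class disjointness argument, using $\SO(Q,n)=\{x\mapsto \alpha x\beta\}$ to show a proper isometry between lattices in distinct classes would force a two-sided equivalence, is left implicit in the paper's proof, and spelling it out only makes the argument more complete.
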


\begin{proof}
Let $(J,n) \in {\mathcal G}_{{\mathfrak a}} (Q,n) $.
Proposition \ref{normalamaximal} shows that $J$ is an ${\fraka }$-normalised normal lattice in $Q $.
The choice of $J_1,\dots,J_k$ implies that there exists a unique index $1 \le i \le k$ such that $J \in C_{\fraka} (J_i)$.
Proposition \ref{latJ} shows that $(J,n)$ is properly isometric to one and only one lattice in $\Lat(J_i)$.
\end{proof}

\section{Eichler's mass formula.} 

As above let $Q$ be a totally definite quaternion algebra over the 
totally real number field $K$. 
Denote by $\frakp _1,\ldots , \frakp _s $ the maximal ideals of $\Z_K$ that 
ramify in $Q$ (i.e. where the completion $Q_{\frakp _i} $ is a division algebra). 
Let $\mathcal{M}$ be a maximal order in $Q$.
The unit group index $[\mathcal{M}^*: \Z_K^*]$ is finite, see for example \cite[Th\'eor\`eme V.1.2]{Vigneras}.
Let
$${\mathcal I}({\mathcal M}) := \{ I_1,\dots,I_h \}$$ be a system of representatives of the left equivalence classes of right ideals of $\mathcal{M}$.
The number of these classes is finite and does not depend on the maximal order $\mathcal{M}$.
Hence $h$ is called the {\em class number} of $Q$
and it is always bigger or equal to the {\em type number} $t$ of $Q$, the number of conjugacy classes of maximal orders in $Q$, see for example \cite[Th\'eor\`eme III.5.4]{Vigneras} and the accompanying discussion.
The {\em mass} of ${\mathcal M}$  is
\[ \Mass(\mathcal{M}) := \sum_{i=1}^h [ O_{\ell}(I_i)^*  : \Z_K^*]^{-1}   \:. \]

\begin{theorem}[Eichler \protect{\cite{EichlerZT}}]\label{eichlermass}
	$$\Mass (\mathcal{M}) = 2^{1-[K:\Q ]} | \zeta_K(-1) | h_K \prod _{i=1}^s (|\Z_K/\frakp_i|-1) $$
	where $h_K$ is the class number of $K$.
\end{theorem}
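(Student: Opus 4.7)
The plan is to derive the formula by reformulating $\Mass(\mathcal{M})$ as an adelic volume and invoking Weil's theorem that the Tamagawa number of the algebraic group $G := Q^{(1)}$ of reduced norm-one elements equals $1$. A term-by-term local volume computation then produces exactly the claimed Euler factors and zeta value.

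First I would reinterpret the mass adelically. Write $\hat{\mathcal{M}}^* = \prod_\frakp \mathcal{M}_\frakp^*$. By strong approximation for $Q^*$, the set $\mathcal{I}(\mathcal{M})$ is in bijection with the double coset space $Q^*\backslash \hat{Q}^*/\hat{\mathcal{M}}^*$, and the stabiliser in $Q^*$ of the class of $I_i$ is $O_\ell(I_i)^*$. Passing to reduced norm-one elements via the Hasse--Schilling--Maass theorem (which gives $n(Q^*) = K_{>0}^*$) and the exact sequence $1\to G \to Q^* \to K_{>0}^*\to 1$ converts this to a double coset description for $G$, at the cost of a class-number factor $h_K$ (from $K_{>0}^*\backslash \hat{K}^*/\hat{\Z}_K^* \cong \CL^+(K)$ and the index $[\CL^+(K):\CL(K)]$) together with a power-of-$2$ correction from $[\Z_{K,>0}^*:(\Z_K^*)^2]$. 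After choosing any factorable Haar measure $\mu$ on $G(\A)$ one obtains an identity of the shape
$$\Mass(\mathcal{M})\,\mu(G(K_\infty))\,\mu(\hat{\mathcal{M}}^{(1)}) = c(K)\,h_K\,\mu\bigl(G(K)\backslash G(\A)\bigr),$$
where $c(K)$ is an explicit power of $2$ depending only on $[K:\Q]$.

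Second, I would take $\mu$ to be the Tamagawa measure, for which Weil's theorem gives $\mu(G(K)\backslash G(\A)) = 1$. The local volumes are standard: at each real place $\sigma$, $G(K_\sigma)\cong SU(2)$ is compact of Tamagawa volume $2\pi^2$; at an unramified finite prime $\frakp$ one has $\mu_\frakp(\mathcal{M}_\frakp^{(1)}) = 1 - q_\frakp^{-2}$; and at a ramified prime $\frakp_i$ the local group has volume $1 - q_{\frakp_i}^{-1}$. Recognising the infinite product over all finite primes as an Euler product for $\zeta_K(2)$ yields
$$\mu(\hat{\mathcal{M}}^{(1)}) = \zeta_K(2)^{-1}\prod_{i=1}^{s}\frac{1-q_{\frakp_i}^{-1}}{1-q_{\frakp_i}^{-2}} = \zeta_K(2)^{-1}\prod_{i=1}^{s}\frac{q_{\frakp_i}}{q_{\frakp_i}+1}.$$

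Finally, I would convert $\zeta_K(2)$ to $|\zeta_K(-1)|$ using the functional equation of the Dedekind zeta function, which for totally real $K$ reads
$$\zeta_K(2) = 2^{[K:\Q]}\,\pi^{2[K:\Q]}\,|d_K|^{3/2}\,|\zeta_K(-1)|.$$
The archimedean volume $(2\pi^2)^{[K:\Q]}$ cancels the transcendental factors on the right, and the discriminant $|d_K|^{3/2}$ cancels the analogous factor implicit in the Tamagawa normalisation. The ramified correction $\prod_i q_{\frakp_i}/(q_{\frakp_i}+1)$ is then combined with the stated $(q_{\frakp_i}-1)$ using $(q_{\frakp_i}-1)(q_{\frakp_i}+1) = q_{\frakp_i}^2 - 1$, so that after dividing by $\mu_{\frakp_i}(\mathcal{M}_{\frakp_i}^{(1)}) = 1 - q_{\frakp_i}^{-1}$ only $(q_{\frakp_i}-1)$ survives at each ramified place; verifying that the $2$-powers collapse to $2^{1-[K:\Q]}$ then produces the stated formula. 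The main obstacle is precisely this bookkeeping of $2$-powers: factors of $2$ enter from the centre $\{\pm 1\}\subset G$, from the functional equation, from the archimedean volume of $SU(2)$, from the comparison of $\CL^+(K)$ with $\CL(K)$, and from the quotient $\Z_{K,>0}^*/(\Z_K^*)^2$, and only their careful combination yields the exponent $1-[K:\Q]$.
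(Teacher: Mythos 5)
First, a point of comparison: the paper does not prove this theorem at all --- it is imported from Eichler with the citation \cite{EichlerZT}, and no proof follows the statement --- so there is no internal argument to measure yours against. Your outline is the standard modern derivation (Tamagawa measure, Weil's theorem that the norm-one group has Tamagawa number $1$, local volume computations, functional equation), and that strategy does work; but two of your local inputs are wrong and the genuinely hard step is deferred, so as it stands the proposal does not produce the stated constant.

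Concretely: in the normalisation where an unramified place contributes $\mu_\frakp(\mathcal{M}_\frakp^{(1)}) = \lvert\SL_2(\F_q)\rvert/q^3 = 1-q^{-2}$, the compact group $\mathcal{M}_{\frakp_i}^{(1)} = Q_{\frakp_i}^{(1)}$ at a ramified place has volume $(1-q^{-2})/(q-1) = (q+1)/q^2$, not $1-q^{-1}$: its reduction modulo the radical is the norm-one subgroup of $\F_{q^2}^*$, of order $q+1$, sitting over a pro-$p$ part of volume $q^{-2}$. This is precisely the source of the factor $\lvert\Z_K/\frakp_i\rvert-1$ in the statement; your value yields $q/(q+1)$ where $1/(q-1)$ is needed, and the discrepancy $q(q-1)/(q+1)$ varies with $q$, so no global rescaling of the invariant differential form can repair it. (Sanity check for $K=\Q$ and the Hurwitz order: the correct data give $\mathrm{vol}(\SU(2))\cdot\tfrac{8}{\pi^2}\cdot\tfrac{3}{4}=24=\lvert\mathcal{M}^{(1)}\rvert$ with $\mathrm{vol}(\SU(2))=4\pi^2$, hence $\Mass(\mathcal{M})=1/12=(2-1)/12$ as claimed; your local data give $1/4$.) Your functional equation also places the discriminant on the wrong side: for totally real $K$ one has $\zeta_K(2) = (2\pi^2)^{[K:\Q]}\,\lvert d_K\rvert^{-3/2}\,\lvert\zeta_K(-1)\rvert$. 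Finally, the reduction from $Q^*$-ideal classes to the norm-one group --- where $h_K$, the indices $[\mathcal{M}_i^*:\Z_K^*\mathcal{M}_i^{(1)}]$ and $[\Z_{K,>0}^*:(\Z_K^*)^2]$ must all be tracked and shown to collapse to $h_K\cdot 2^{1-[K:\Q]}$ --- is exactly the content of the proof, and you explicitly leave it as ``bookkeeping''; note moreover that strong approximation for $Q^{(1)}$ fails here because all archimedean places are ramified, which is why the class set of $Q^{(1)}$ is nontrivial and must be summed with weights $1/\lvert\Gamma_x\rvert$ rather than collapsed via the norm map as your first paragraph suggests. Until the ramified volumes and this reduction are carried out correctly, the exponent $1-[K:\Q]$ and the factor $\prod_i(\lvert\Z_K/\frakp_i\rvert-1)$ are not established.
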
 

Let 
$h_K^+:=|\CL^+(K) | $ be the narrow class number of $K$ and fix some narrow class $[\fraka ] \in \CL^+(K)$. 
Then we define 
$${\mathcal I} ({\mathcal M} ,[\fraka ]): = \{ I \in {\mathcal I}({\mathcal M}) \mid [n(I)] = [\fraka ] \} $$
and 
\[ \Mass(\mathcal{M},[\fraka ]) := \sum_{I \in {\mathcal I} ({\mathcal M} ,[\fraka ])} [ O_{\ell}(I)^*  : \Z_K^*]^{-1}   .\]

\begin{theorem} 
  $ \Mass (\mathcal{M}) = h_K^+ \cdot \Mass(\mathcal{M},[\fraka ])$.
\end{theorem}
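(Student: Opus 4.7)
The plan is to partition $\mathcal{I}(\mathcal{M})$ according to the narrow class of $n(I)$ and show that all pieces of the partition contribute the same amount to the total mass. First, I check that $[I]\mapsto[n(I)]$ is a well-defined map $\mathcal{I}(\mathcal{M})\to\CL^+(K)$: if $I'=\alpha I$ for some $\alpha\in Q^*$, then $n(I')=n(\alpha)n(I)$, and by the Hasse-Schilling-Maass theorem (applicable because $Q$ is totally definite) $n(\alpha)\in K^*$ is totally positive, so $[n(I')]=[n(I)]$ in $\CL^+(K)$. Surjectivity of this map follows because the reduced norm $n\colon Q_\frakp^*\to K_\frakp^*$ is surjective at every finite place (standard for both split and division local quaternion algebras), which lets one realize any fractional ideal of $\Z_K$ as the norm of a right $\mathcal{M}$-ideal. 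This yields the partition
\[ \Mass(\mathcal{M}) \;=\; \sum_{[\fraka]\in\CL^+(K)} \Mass(\mathcal{M},[\fraka]), \]
and reduces the statement to showing that $\Mass(\mathcal{M},[\fraka])$ is independent of $[\fraka]$.

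The main step is to construct, for any two narrow classes $[\fraka_1],[\fraka_2]$, a weight-preserving bijection $\mathcal{I}(\mathcal{M},[\fraka_1])\leftrightarrow\mathcal{I}(\mathcal{M},[\fraka_2])$. I would use the adelic identification $\mathcal{I}(\mathcal{M})=Q^*\backslash\hat Q^*/\hat{\mathcal{M}}^*$, under which the left order corresponding to a coset $Q^*\hat x\hat{\mathcal{M}}^*$ is $O_\ell=Q^*\cap\hat x\hat{\mathcal{M}}^*\hat x^{-1}$ and the norm class is $[n(\hat x)\hat\Z_K^*\cap K^*]$. Choose $\hat\beta\in\hat Q^*$ with $n(\hat\beta)$ representing $[\fraka_1\fraka_2^{-1}]$ (possible by local surjectivity of the reduced norm) and consider right multiplication $\hat x\mapsto\hat x\hat\beta^{-1}$. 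This descends to a bijection
\[ Q^*\backslash\hat Q^*/\hat{\mathcal{M}}^* \;\longrightarrow\; Q^*\backslash\hat Q^*/\hat{\mathcal{M}}'^*, \qquad \hat{\mathcal{M}}':=\hat\beta^{-1}\hat{\mathcal{M}}\hat\beta, \]
and a short computation, using $\hat\beta\hat{\mathcal{M}}'^*\hat\beta^{-1}=\hat{\mathcal{M}}^*$, shows that the left order of the image is still $Q^*\cap\hat x\hat{\mathcal{M}}^*\hat x^{-1}$. So the unit-group index $[O_\ell(\cdot)^*:\Z_K^*]$ is unchanged, while the narrow class of the norm is shifted by $[\fraka_1\fraka_2^{-1}]^{-1}$. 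Thus $\Mass(\mathcal{M},[\fraka_1])=\Mass(\mathcal{M}',[\fraka_2])$ for $\mathcal{M}':=\hat{\mathcal{M}}'\cap Q$.

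The hard part is that $\mathcal{M}'$ is only guaranteed to lie in the genus of $\mathcal{M}$, not in its $Q^*$-conjugacy class. To close this gap I would combine the adelic bijection with a connecting-ideal step: since $\mathcal{M}'$ and $\mathcal{M}$ are in the same genus, there is a normal lattice $J$ with $O_\ell(J)=\mathcal{M}'$ and $O_r(J)=\mathcal{M}$, and the map $I\mapsto IJ$ is a weight-preserving bijection $\mathcal{I}(\mathcal{M}')\to\mathcal{I}(\mathcal{M})$ shifting norm classes by $[n(J)]$ (as $O_\ell(IJ)=O_\ell(I)$). Composing with the previous bijection gives a weight-preserving map $\mathcal{I}(\mathcal{M},[\fraka_1])\to\mathcal{I}(\mathcal{M},[\fraka_2\cdot n(J)])$. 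The flexibility in choosing $\hat\beta$ (locally, via the surjectivity of $n$) and $J$ (right-translating by two-sided $\mathcal{M}$-ideals) is exactly enough to absorb $[n(J)]$ and land in $[\fraka_2]$ on the nose; verifying this coordination is the main technical point of the argument. Once the bijection is in hand, $\Mass(\mathcal{M},[\fraka_1])=\Mass(\mathcal{M},[\fraka_2])$ for all pairs of classes, and summing over the $h_K^+$ narrow classes yields $\Mass(\mathcal{M})=h_K^+\cdot\Mass(\mathcal{M},[\fraka])$.
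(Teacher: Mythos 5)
Your overall skeleton is the same as the paper's: partition $\Mass(\mathcal{M})=\sum_{[\frakb]\in\CL^+(K)}\Mass(\mathcal{M},[\frakb])$ (your first paragraph is fine) and then show that $\Mass(\mathcal{M},[\fraka])$ does not depend on $[\fraka]$. The difference is how that independence is established. The paper does not attempt a direct bijection: it observes that $\mathcal{I}(\mathcal{M},[\fraka])$ can be identified with the stably free classes of a second maximal order $\mathcal{M}'$, and then invokes the result of Vign\'eras (the discussion after Th\'eor\`eme 1 of \cite{Vignerasunits}) that the stably free mass is independent of the maximal order. Your adelic twist reproves the first half of this: right multiplication by $\hat\beta^{-1}$ gives a weight-preserving bijection $\mathcal{I}(\mathcal{M},[\fraka_1])\to\mathcal{I}(\mathcal{M}',[\fraka_2])$ for a generally different maximal order $\mathcal{M}'$. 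The genuine gap is exactly the step you flag as ``the main technical point'': getting from $\mathcal{M}'$ back to $\mathcal{M}$.

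This gap cannot be closed by the coordination you propose. Any connecting ideal $J$ with $O_{\ell}(J)=\mathcal{M}'$ and $O_r(J)=\mathcal{M}$ differs from the canonical one $\hat\beta^{-1}\hat{\mathcal M}\cap Q$ by right multiplication with a two sided ideal of $\mathcal{M}$, and the narrow classes of norms of two sided ideals of $\mathcal{M}$ form the subgroup of $\CL^+(K)$ generated by the classes of the ramified primes $\frakp_1,\dots,\frakp_s$ and the squares (cf.\ \cite[Theorems 22.4 and 22.10]{Reiner}). Hence the total shift realised by your composite $I\mapsto (I\hat\beta^{-1})J$ is $[n(\hat\beta)]^{-1}[n(J)]\in [n(\hat\beta)]^{-2}\cdot\langle[\frakp_1],\dots,[\frakp_s]\rangle\cdot(\CL^+(K))^2=\langle[\frakp_1],\dots,[\frakp_s]\rangle\cdot(\CL^+(K))^2$, independently of how $\hat\beta$ and $J$ are chosen. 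This subgroup is proper in general: in Example \ref{q15} of the paper one has $K=\Q(\sqrt{15})$, $s=0$ and $\CL^+(K)\cong C_2\times C_2$, so the achievable shift is trivial and your construction can never move between $\mathcal{I}(\mathcal{M},[\Z_K])$ and $\mathcal{I}(\mathcal{M},[\frakp_3])$. To finish one really needs the order-independence of $\Mass(\,\cdot\,,[\fraka])$, which is not a formal consequence of such right-multiplication bijections; it is proved by a volume argument (the mass equals a Haar-measure volume divided by $\operatorname{vol}(\hat{\mathcal{M}}^*)$, and conjugate compact open subgroups have equal volume), i.e.\ precisely the content of the Vign\'eras citation that the paper uses and that your argument would still have to import or reprove.
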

\begin{proof}
There exists some maximal order  $\mathcal{M}'$ in $Q$ such that $\mathcal{I}(\mathcal{M}, [ \fraka ]) = \mathcal{I}(\mathcal{M}', [\Z_K])$.
The discussion after \cite[Th\'eor\`eme 1]{Vignerasunits} shows that $\Mass(\mathcal{M},[\fraka ]) = \Mass(\mathcal{M}',[\Z_K ])$ does not depend on the maximal order $\mathcal{M}'$.
Hence $\Mass(\mathcal{M}, [\fraka]) = \Mass(\mathcal{M}, [\frakb])$ for all fractional ideals $\frakb$ of $\Z_K$ and therefore
\[ \Mass(\mathcal{M}) = \sum_{[\frakb] \in \CL^+(K)} \Mass(\mathcal{M},[\frakb]) = h_K^+ \cdot \Mass(\mathcal{M}, [\fraka]) \:. \qedhere \]
\end{proof}

\section{The Minkowski-Siegel mass formula}\label{Siegel}

In the spirit of our paper relating 
normal ideals in the quaternion algebra $Q$ to 
maximal lattices in $(Q,n)$ this section compares 
Eichler's mass formula for ideals to the well known Minkowski-Siegel mass formula 
for lattices. Whereas Eichler's formula involves the  class number $h_K$ of $K$, 
the Minkowski-Siegel formula does not. 
Our comparison below explains how the class number cancels out. 

The quotient of the narrow class number and the class number is  
$$|\Z_{K,>0}^* /(\Z_{K}^*)^2 | = \frac{h_K^+}{h_K} =: 2^u . $$
Let ${\mathcal P}(\Z_K) \cong K^*/\Z_K^* $ be the group of fractional 
principal ideals. 

Let ${\mathcal M}_1,\ldots, {\mathcal M}_t$ represent the conjugacy classes of 
maximal orders in $Q$ and let
$$N_i:=  N ({\mathcal M}_i)/K^* \quad (1\leq i \leq t).$$ 
For $1\leq i ,j \leq t $ we define the following maps: 
$$\overline{n_i} : N_i \to {\mathcal P}(\Z_K) / {\mathcal P}(\Z_K) ^2, 
\alpha K^* \mapsto (n(\alpha ))  {\mathcal P}(\Z_K) ^2 $$
and 
$$\overline{n_i} \times \overline{n_j} : N_i\times N_j \to {\mathcal P}(\Z_K) / {\mathcal P}(\Z_K) ^2,
(\alpha K^*,\beta K^*) \mapsto (n(\alpha )n(\beta ^{-1}))   {\mathcal P}(\Z_K) ^2. $$
Let $\Pi _i:= \overline{n_i} (N_i) $ be the image of $\overline{n_i}$ and $2^{f_i} := |\Pi _i |$ denote its order. 
The kernel of $\overline{n_i}$ is $\mathcal{M}_i^* K^*/K^* \isom \mathcal{M}_i^* / \Z_K^*$.
Thus $2^{f_i} = \abs{N_i} / [ \mathcal{M}_i^* : \Z_K^*]$.
By \cite[p. 137]{EichlerZT}, the order of the two sided ideal class group of ${\mathcal M}_i$ is 
$$H({\mathcal M}_i) = 2^{s-f_i} h_K . $$
Moreover the image of 
$\overline{n_i} \times \overline{n_j}  $ is $\Pi _i \Pi _j $ of order $2^{f_i+f_j-f_{ij} }$ 
where $$|\Pi _i \cap \Pi _j | = 2^{f_{ij}}.$$ 
Let
\begin{align*}   U_{ij}:= \Kern (\overline{n_i} \times \overline{n_j} )  & =  
\{ (\alpha K^* ,\beta K^* ) \in N_i \times N_j \mid n(\alpha \beta ^{-1}) (K^*)^2 \in \Z_K^* (K^*)^2 \} 
\\
& = \{ (\alpha K^* ,\beta K^* ) \mid (\alpha, \beta ) \in \NN ({\mathcal M}_i {\mathcal M}_j)\}  \end{align*} 
where $\NN ({\mathcal M}_i {\mathcal M}_j)$ is given by eq. \eqref{eq:NN} and define
	$$\overline{n_{ij}} :  U_{ij} \to {\mathcal P}(\Z_K) / {\mathcal P}(\Z_K) ^2 , 
	(\alpha K^* ,\beta K^* ) \mapsto (n(\alpha )) {\mathcal P}(\Z_K) ^2 .$$
	Then the image of $\overline{n_{ij}} $ is exactly $\Pi _i \cap \Pi _j $ 
	and the kernel of $\overline{n_{ij}} $ is 
	\begin{align*}
		\{ (\alpha K^* ,\beta K^* ) \in N_i \times N_j \mid n(\alpha ) \in \Z_K^* (K^*)^2, n(\beta ) \in \Z_K^* (K^*)^2 \}  \\
= {\mathcal M}_i^* K^* /K^* \times {\mathcal M}_j^* K^* /K^* \cong {\mathcal M}_i^*/\Z_K^* \times  {\mathcal M}_j^* / \Z_K^* .\end{align*} 
	We need one more map 
	$$\widetilde{n_{ij}}:  U_{ij} \to \Z_{K,>0}^* (K^*)^2/(K^*)^2 \cong \Z_{K,>0}^* /(\Z_{K}^*)^2,\ 
	(\alpha K^*,\beta K^*) \mapsto n(\alpha \beta ^{-1}) (K^*)^2 $$ and its kernel 
	$ V_{ij} := \Kern (\widetilde{n_{ij}}) $. 
	
	Let $\mathcal{M}_i^{(1)} = \{ \alpha \in \mathcal{M}_i^* \mid n(\alpha) = 1 \}$ be the norm one subgroup of $\mathcal{M}_i^*$.
	Since $Q$ is totally definite, the group $\mathcal{M}_i^{(1)} $ is finite.
	Let $2^{y_{ij}}$ be the index of $ {\mathcal M}_i^{(1)}  / \{\pm 1\} \times {\mathcal M}_j^{(1)} /\{\pm 1\}$ in $ V_{ij}$.
	
	\begin{remark}\label{zij} 
		Let $J$ be a normal lattice in $Q$ with right order $O_r(J) = {\mathcal M}_j$ and 
		left order $O_{\ell}(J) = {\mathcal M}_{i}$. 
		Then the subgroup $U(J)\leq \Z_{K,>0}^* $  from Proposition \ref{latJ} satisfies 
		$$U(J) / (\Z_K^*)^2  = \widetilde{n_{ij}} ( U_{ij}) .$$
		In particular 
		$$|\Z_{K,>0}^*/U(J)| =: 2^{z_{ij}} \mbox{ with } 2^{u-z_{ij}} = |U_{ij}/ V_{ij} | .$$
	\end{remark}

All the groups defined above contain 
$${\mathcal M}_i^{(1)} K^*/K^*
\times {\mathcal M}_j^{(1)} K^*/K^* \cong  
{\mathcal M}_i^{(1)}  / \{\pm 1\}
\times {\mathcal M}_j^{(1)} /\{\pm 1\} \:. $$  
For further computations we define 
$$ 2^{x_i} := |{\mathcal M}_i^*/{\mathcal M}_i^{(1)} \Z_K^* | = |n({\mathcal M}_i^*)/(\Z_K^*)^2| .$$
Figure \ref{fig:NiNj} illustrates the various subgroups of the group $N_i\times N_j$.

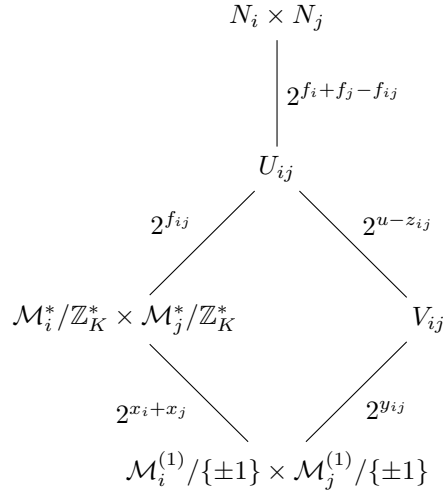
\begin{figure}[!h]
\begin{center}
\begin{tikzpicture}
\node (NN) at ( 0, 0) {$N_i \times N_j$};
\node (U)  at ( 0,-2) {$U_{ij}$};
\node (V)  at ( 2,-4) {$V_{ij}$};
\node (ME) at (-2,-4) {${\mathcal M}_i^*/\Z_K^* \times {\mathcal M}_j^*/ \Z_K^*$};
\node (M1) at ( 0,-6) {${\mathcal M}_i^{(1)} / \{\pm 1\} \times {\mathcal M}_j^{(1)} / \{\pm 1\}$}; 
\draw (NN) --node[right]{$2^{f_i+f_j-f_{ij}}$} (U) --node[above right]{$2^{u-z_{ij}}$} (V) --node[below right]{$2^{y_{ij}}$} (M1) --node[below left]{$2^{x_i+x_j}$} (ME) --node[above left]{$2^{f_{ij}}$} (U);
\end{tikzpicture}
\end{center}
\caption{Some subgroups of $N_i \times N_j$ and their indices.}\label{fig:NiNj}
\end{figure}

\begin{lemma}\label{autij}
	Let $J$ be as in Remark \ref{zij}. 
	Then the proper isometry group of the lattice $(J,n)$ only depends on the 
	left and right orders of $J$ and 
	$$|\Aut^+ (J,n) | = \frac{1}{2}
	|{\mathcal M}_i^{(1)}|
	|{\mathcal M}_j^{(1)}| 2^{y_{ij}} .$$
\end{lemma}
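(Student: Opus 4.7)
The plan is to identify $\Aut^+(J,n)$ with a quotient of a group built purely from the normalisers of $\mathcal{M}_i$ and $\mathcal{M}_j$, and then to count using the index $2^{y_{ij}}$ already displayed in Figure \ref{fig:NiNj}. By Proposition \ref{latJ}, with $\mathcal{M}_i = O_{\ell}(J)$ and $\mathcal{M}_j = O_r(J)$, the group $\Aut^+(J,n)$ is the image of the homomorphism
$$\tilde{G} := \{(\alpha,\beta) \in N(\mathcal{M}_i) \times N(\mathcal{M}_j) \mid n(\alpha) = n(\beta)\} \to \GL(J), \quad (\alpha,\beta) \mapsto \bigl(\gamma \mapsto \alpha\gamma\beta^{-1}\bigr).$$
Since $\tilde{G}$ only involves $\mathcal{M}_i$ and $\mathcal{M}_j$, the first claim will follow as soon as the kernel is also seen to be independent of $J$.

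The first step is to determine this kernel. If $\alpha\gamma = \gamma\beta$ for every $\gamma \in J$, then $K$-linearity extends the identity to all of $Q$ (as $J$ spans $Q$ over $K$); the choice $\gamma = 1$ forces $\alpha = \beta$, after which $\alpha$ commutes with everything in $Q$, hence lies in $K^*$. Thus the kernel equals the diagonal $D := \{(a,a) \mid a \in K^*\}$ and $\Aut^+(J,n) \cong \tilde{G}/D$, establishing the first assertion.

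The second step is to compute $|\tilde{G}/D|$ by passing to $N_i \times N_j = N(\mathcal{M}_i)/K^* \times N(\mathcal{M}_j)/K^*$. The natural projection $\pi : \tilde{G} \to N_i \times N_j$ lands in $V_{ij}$, since $n(\alpha\beta^{-1}) = 1 \in (K^*)^2$. Conversely, if $(\alpha K^*, \beta K^*) \in V_{ij}$ with $n(\alpha\beta^{-1}) = c^2$, then $(c^{-1}\alpha, \beta)$ lies in $\tilde{G}$ and represents the same class, so $\pi$ is surjective. The kernel of $\pi$ is $\{(a,b) \in K^* \times K^* \mid a^2 = b^2\} = \{(a,\pm a) \mid a \in K^*\}$, a subgroup of index $2$ over $D$. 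Hence $|\tilde{G}/D| = 2\,|V_{ij}|$.

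To finish, I would read off $|V_{ij}|$ from the index recorded in Figure \ref{fig:NiNj}, namely
$$|V_{ij}| \;=\; 2^{y_{ij}} \cdot \frac{|\mathcal{M}_i^{(1)}|}{2} \cdot \frac{|\mathcal{M}_j^{(1)}|}{2},$$
which combined with $|\Aut^+(J,n)| = 2\,|V_{ij}|$ yields the stated formula. The only genuinely non-formal step is the surjectivity and kernel analysis of $\pi$ (and the observation that $J$ spans $Q$ over $K$ in the kernel computation); the rest is elementary counting inside the lattice of subgroups of $N_i \times N_j$.
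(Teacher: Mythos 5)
Your proposal is correct and follows essentially the same route as the paper: the paper simply states that the assignment $(\gamma\mapsto\alpha\gamma\beta^{-1})\mapsto(\alpha K^*,\beta K^*)$ gives an epimorphism $\Aut^+(J,n)\onto V_{ij}$ with kernel $\{\pm\id_J\}$, which is exactly what your two-step analysis through $\tilde G$ (kernel $D$ on one side, $\{(a,\pm a)\}$ on the other) establishes, and both arguments then read off $|V_{ij}|$ from the definition of $2^{y_{ij}}$. Your write-up just makes explicit the surjectivity and kernel computations that the paper leaves implicit.
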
 
\begin{proof}
By Proposition \ref{latJ}  every proper automorphism of $(J,n)$ is of the form $x \mapsto \alpha x \beta^{-1}$ with $\alpha \in {N}(\mathcal{M}_i)$, $\beta \in {N}(\mathcal{M}_j)$ and $n(\alpha) = n(\beta)$.
This induces an epimorphism $\Aut^+(J,n) \to  V_{i,j}$ with kernel $\{ \pm \id_J \}$.
\end{proof}


\begin{lemma}\label{numij}
The number of two sided equivalence classes represented by normal lattices in $Q$ having left order $\mathcal{M}_i$ and right order $\mathcal{M}_j$ is
$$h_K 2^{s-f_i-f_j+f_{ij} } .$$
\end{lemma}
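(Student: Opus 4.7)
The plan is to parametrise the set $A_{ij}$ of normal lattices with exact left order $\mathcal{M}_i$ and exact right order $\mathcal{M}_j$ as a torsor for the abelian group of invertible two sided $\mathcal{M}_j$-ideals, realise two sided equivalence on $A_{ij}$ as the action of an explicit subgroup, and finally count its orbits as an index inside the two sided ideal class group, whose order is $H(\mathcal{M}_j) = 2^{s-f_j} h_K$.

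First I would check that $A_{ij}$ is non-empty: the product $\mathcal{M}_i \mathcal{M}_j$ satisfies $O_\ell \supseteq \mathcal{M}_i$ and $O_r \supseteq \mathcal{M}_j$, forcing equality by maximality of both orders. Fix $J_0 \in A_{ij}$. The map $J \mapsto J_0^{-1} J$ is then a bijection from $A_{ij}$ onto the group of invertible two sided $\mathcal{M}_j$-ideals, under which left multiplication by $\alpha \in N(\mathcal{M}_i)$ corresponds to multiplication by $\phi(\alpha) := J_0^{-1} \alpha J_0$; a short check using $J_0 J_0^{-1} = \mathcal{M}_i$ and $\alpha \mathcal{M}_i = \mathcal{M}_i \alpha$ shows that $\phi$ is a group homomorphism from $N(\mathcal{M}_i)$ into the group of two sided $\mathcal{M}_j$-ideals.

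Next I would argue that if $J_0 I$ and $J_0 I'$ in $A_{ij}$ are two sided equivalent via $J_0 I' = \alpha J_0 I \beta^{-1}$, equality of left and right orders forces $\alpha \in N(\mathcal{M}_i)$ and $\beta \in N(\mathcal{M}_j)$; translating back, $I' = \phi(\alpha) \cdot I \cdot \mathcal{M}_j \beta^{-1}$, whose last factor is a principal two sided ideal. Passing to the two sided ideal class group therefore gives
\[ |A_{ij}/{\sim}| \;=\; \frac{H(\mathcal{M}_j)}{|\overline{\phi(N(\mathcal{M}_i))}|} \;=\; \frac{2^{s-f_j} h_K}{|\overline{\phi(N(\mathcal{M}_i))}|}, \]
reducing the lemma to the identity $|\overline{\phi(N(\mathcal{M}_i))}| = 2^{f_i - f_{ij}}$.

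This final index is the main obstacle. The kernel of $\overline{\phi}$ viewed as a map into the two sided ideal class group consists of those $\alpha$ for which $\phi(\alpha)$ is principal, equivalently $\alpha J_0 \beta^{-1} = J_0$ for some $\beta \in N(\mathcal{M}_j)$; by Lemma \ref{normaliserequal} this is precisely the first-coordinate projection $\pi_1(\NN(J_0))$. Unpacking the unit condition in \eqref{eq:NN} at the level of $\calP(\Z_K)/\calP(\Z_K)^2$, one sees that $\alpha \in \pi_1(\NN(J_0))$ iff $\overline{n_i}(\alpha K^*) \in \Pi_i \cap \Pi_j$: the forward direction reads off the condition $n(\alpha)/n(\beta) \in \Z_K^*$, while for the converse I would pick some $\beta' \in N(\mathcal{M}_j)$ realising the target class and rescale it by a suitable $\gamma \in K^*$ (which leaves its image in $N_j$ unchanged but multiplies $n(\beta')$ by $\gamma^2$) to absorb the residual square factor from $\calP(\Z_K)^2$ and achieve $n(\alpha)/n(\gamma\beta') \in \Z_K^*$. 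This identifies $\overline{\phi(N(\mathcal{M}_i))} \cong \Pi_i/(\Pi_i \cap \Pi_j)$, which has order $2^{f_i - f_{ij}}$; substituting in the previous display yields the claimed formula $h_K \cdot 2^{s - f_i - f_j + f_{ij}}$.
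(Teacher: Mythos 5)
Your proof is correct; the count it produces and every intermediate step check out. It does, however, organise the computation differently from the paper. The paper's proof is symmetric in $i$ and $j$: it takes a transversal $(T_1,\dots,T_{2^s h_K})$ of $\{x\calM_i \mid x \in K^*\}$ in the group of two sided $\calM_i$-ideals, lets $N_i \times N_j$ act on the resulting set $\calS = \{T_\ell \calM_i \calM_j\}$ of size $2^s h_K$, identifies every stabiliser as $U_{ij}$ via Lemma \ref{normaliserequal}, and reads off the orbit count as $2^s h_K / [N_i \times N_j : U_{ij}] = 2^s h_K / 2^{f_i+f_j-f_{ij}}$ by orbit--stabiliser. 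You instead quotient in two asymmetric stages: first by the principal two sided $\calM_j$-ideals, landing in the two sided ideal class group of order $H(\calM_j) = 2^{s-f_j} h_K$ (Eichler's formula, quoted in the paper just before the lemma), and then by the image of $N(\calM_i)$ under $\alpha \mapsto [J_0^{-1}\alpha J_0]$, whose order you compute as $\lvert \Pi_i/(\Pi_i \cap \Pi_j)\rvert = 2^{f_i-f_{ij}}$ --- again via Lemma \ref{normaliserequal}, now used to identify a kernel rather than a stabiliser; indeed your kernel computation is exactly the paper's stabiliser computation projected to the first factor. The trade-off is that your argument takes $H(\calM_j) = 2^{s-f_j}h_K$ as an external input and requires the small extra checks that $\phi$ is a homomorphism into the two sided ideal group and that its image in the class group is a subgroup, whereas the paper's version is self-contained given the free structure of the two sided ideal group; in exchange, your version makes the roles of the class group $H(\calM_j)$ and of the intersection $\Pi_i \cap \Pi_j$ more transparent, and it explains the factorisation $h_K 2^{s-f_j} \cdot 2^{-(f_i-f_{ij})}$ of the answer.
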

\begin{proof}
Let $(T_1,\dots,T_{2^s h_K})$ be a transversal of $\{ x\calM_i \mid x \in K^* \}$ in the abelian group of two sided ideals of $\calM_i$.
We consider the set $\calS := \{ T_{\ell } \calM_i \calM_j \mid 1 \le {\ell } \le 2^s h_K\}$.
The group $N_i \times N_j$ acts on $\calS$ via
\[ (N_i \times N_j) \times \calS \to \calS,\; ((aK^*, bK^*), I) \mapsto a\lambda Ib^{-1}  \]
where $\lambda \in K^*$ is chosen such that $a\lambda Ib^{-1} \in \calS$.
Lemma \ref{normaliserequal} shows that the stabiliser of any ideal in $\calS$ is $U_{ij}$.
In particular, $\calS$ consists of $h_K 2^{s-f_i-f_j+f_{ij} }$ orbits.
The result follows since the number of orbits is also the number of two sided equivalence classes represented by normal lattices in $Q$ having left order $\mathcal{M}_i$ and right order $\mathcal{M}_j$.
\end{proof}

To state the Minkowski-Siegel mass formula let
 $L_1,\ldots , L_k$ be a system of representatives of proper isometry classes of lattices in 
${\mathcal G}_{\fraka } (Q,n) $. 
	Then the mass of this genus of ${\fraka}$-maximal lattices is defined as 
	$$\Mass ({\mathcal G}_{\fraka} (Q,n) ) := \sum _{i=1}^k \frac{1}{|\Aut^+(L_i)|} .$$ 
	Already Siegel gave an analytic expression for the 
	mass of a genus of arbitrary positive definite $\Z_K$-lattices (see \cite{SiegelI} and
	\cite{SiegelIII}). 
	In our special situation, this expression can also be derived from
 Eichler's mass formula: 

\begin{theorem} 
	For any fractional ideal $\fraka $ of $K$ 
	$$\Mass ({\mathcal G}_{\fraka } (Q,n) ) =
2^{1-2[K:\Q ]}  \zeta_K(-1) ^2 \prod _{i=1}^s \frac{(|\Z_K/\frakp_i|-1)^2}{2} .$$
\end{theorem}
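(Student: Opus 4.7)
The plan is to compute $\Mass(\calG_\fraka(Q,n))$ by collecting the contributions from each two-sided equivalence class of $\fraka$-normalised normal lattices, using the bijection furnished by Theorem~\ref{main}.

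By Theorem~\ref{main},
\[ \Mass(\calG_\fraka(Q,n)) = \sum_{i=1}^k \frac{|\Lat(J_i)|}{|\Aut^+(J_i,n)|}, \]
where $J_1,\ldots,J_k$ represent the two-sided classes of $\fraka$-normalised normal lattices.  By Proposition~\ref{latJ}, Remark~\ref{zij}, and Lemma~\ref{autij} each summand depends only on the pair $(i',j')$ of conjugacy classes of the left and right orders of $J_i$, and equals $2\cdot 2^{z_{i'j'}}/(|\mathcal{M}_{i'}^{(1)}|\,|\mathcal{M}_{j'}^{(1)}|\,2^{y_{i'j'}})$.  The next step is to sum this identity over all narrow classes $[\fraka']\in\CL^+(K)$: invoking the independence of $\Mass(\calG_{\fraka'}(Q,n))$ from $[\fraka']$ (by a twist argument parallel to the one used for $\Mass(\mathcal{M},[\fraka])$ in Section~5) turns the total into $h_K^+ \Mass(\calG_\fraka(Q,n))$, while Lemma~\ref{numij} shows that the aggregate number of two-sided classes with fixed orders $(i',j')$ is $h_K\,2^{s-f_{i'}-f_{j'}+f_{i'j'}}$.

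Substituting and using the index relation $z_{ij}-y_{ij}=u-f_{ij}-x_i-x_j$ read off from Figure~\ref{fig:NiNj}, together with $|\mathcal{M}_i^{(1)}|=2^{1-x_i}[\mathcal{M}_i^*:\Z_K^*]$, the exponents collapse and the double sum factors as a perfect square:
\[ h_K^+ \Mass(\calG_\fraka(Q,n)) = h_K\cdot 2^{s+u-1}\left(\sum_{i=1}^t \frac{2^{-f_i}}{[\mathcal{M}_i^*:\Z_K^*]}\right)^{\!\!2}. \]
The final step is to identify the bracketed sum with $\Mass(\mathcal{M})/(2^s h_K)$, equivalent via $H(\mathcal{M}_i)=2^{s-f_i} h_K$ to the identity $\sum_i H(\mathcal{M}_i)/[\mathcal{M}_i^*:\Z_K^*]=\Mass(\mathcal{M})$.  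Substituting this and cancelling $h_K^+=2^u h_K$ gives $\Mass(\calG_\fraka(Q,n))=\Mass(\mathcal{M})^2/(2^{s+1}h_K^2)$, and expanding $\Mass(\mathcal{M})$ by Theorem~\ref{eichlermass} produces the claimed closed form.

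The main obstacle is the identity $\sum_i H(\mathcal{M}_i)/[\mathcal{M}_i^*:\Z_K^*]=\Mass(\mathcal{M})$.  It reorganises Eichler's mass according to the conjugacy class of the left order of the representing ideal, and rests on the classical fact that the number of left equivalence classes of right $\mathcal{M}$-ideals with left order conjugate to $\mathcal{M}_i$ equals $H(\mathcal{M}_i)$; this follows from an adelic double-coset count or from Vign\'eras' class-number formulas.  By comparison the diagram chasing in Figure~\ref{fig:NiNj} and the $[\fraka]$-independence of the lattice mass amount to straightforward bookkeeping.
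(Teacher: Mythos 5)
Your proof is correct and follows essentially the same route as the paper: sum the mass over all narrow classes, group the two-sided classes by the conjugacy classes of their left and right orders via Lemma~\ref{numij}, combine Proposition~\ref{latJ}, Remark~\ref{zij}, Lemma~\ref{autij} and the index relation $y_{ij}-z_{ij}+u=f_{ij}+x_i+x_j$ to collapse the double sum into $\Mass(\mathcal{M})^2 h_K^{-2}2^{-1-s}$, and finish with Theorem~\ref{eichlermass}. The only cosmetic difference is that you justify the independence of $\Mass(\mathcal{G}_{\fraka'}(Q,n))$ from the narrow class $[\fraka']$ by a twist argument (which does work, since right multiplication by a suitable $\gamma$ permutes the two-sided classes while preserving left orders and automorphism groups), whereas the paper simply cites \cite{GHY}.
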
 

\begin{proof}
	Clearly the map $(L,n) \mapsto (L,an) $ is an isometry preserving  bijection between 
	${\mathcal G}_{\fraka} $ and ${\mathcal G}_{a\fraka} $ for any totally positive $a\in K$. 
	So it is enough to show the theorem for representatives 
	$\fraka _1$, $\ldots $, $\fraka _{h_K^+} $ of $\CL^+(K)$.

	 We fix an order ${\mathcal M}_j$  and some $1\leq i \leq t$.
 Remark \ref{numij} gives the number of right ideals in ${\mathcal I}({\mathcal M_j}) $ having left order 
	 isomorphic to ${\mathcal M}_{i}$ as  
	 $h_K 2^{s-f_i-f_j+f_{ij} } .$ By Proposition \ref{latJ} these right ideals 
	 give rise to $2^{z_{ij}}$ proper isometry classes of lattices (see Remark \ref{zij}), all having 
	 the same proper isometry group which has order 
	 $2 |{\mathcal M}_i^{(1)}/ \{\pm 1\} | |{\mathcal M}_j^{(1)}/ \{\pm 1\} | 2^{y_{ij}} $ 
	 by Lemma \ref{autij}.
	 So 
	 \begin{align*} 
		 \sum _{i=1}^{h_K^+} \Mass ({\mathcal G}_{\fraka _i} (Q,n) ) & =  
		 \sum _{i,j =1}^t 2^{z_{ij}} 
		 \frac{h_K 2^{s-f_i-f_j+f_{ij} } }{2 |{\mathcal M}_i^{(1)}/ \{\pm 1\} | |{\mathcal M}_j^{(1)}/ \{\pm 1\} | 2^{y_{ij}}}  \\ 
		 & = 
		 \sum_{i,j=1}^t h_K 2^{s-f_i} 2^{s-f_j} 2^{u-s} 
	 \frac{2^{f_{ij}}}{2 |{\mathcal M}_i^{(1)}/ \{\pm 1\} | |{\mathcal M}_j^{(1)}/ \{\pm 1\} | 2^{y_{ij}-z_{ij}+u}} \\
	 \intertext{ using $y_{ij}-z_{ij} +u = f_{ij}+x_i+x_j$ we conclude }
	 &=
	 \sum_{i,j=1}^t \frac{H({\mathcal M}_i) H({\mathcal M}_j) }{h_K} \frac{h_k^+}{h_K}  2^{-s} 
	 \frac{1}{2 |{\mathcal M}_i^{*}/\Z_K^* | |{\mathcal M}_j^{*}/\Z_K^* | }  \\
 &= \Mass({\mathcal M})^2 \frac{h_K^+}{h_K^2} 2^{-1-s} .\end{align*} 
	 Now by \cite{GHY}
	 the mass of ${\mathcal G}_{\fraka _i} (Q,n)$ does not depend on $i$, 
	(as locally the lattices are just rescaled versions of each other) so  for all $i$ 
	$$\Mass ({\mathcal G}_{\fraka _i} (Q,n) ) = \Mass({\mathcal M})^2 h_K^{-2} 2^{-1-s} $$ 
	and the theorem follows from the computation of $\Mass({\mathcal M}) $ in 
	Theorem \ref{eichlermass}.
\end{proof}

\section{Proper isometry classes in  ${\mathcal G}_{\fraka } (Q,n)$} \label{algq}

This section uses  the method from \cite{KirschmerVoight} to develop an algorithm 
for determining a system of representatives of the proper isometry classes 
in ${\mathcal G}_{\fraka }(Q,n) $. As explained in Remark \ref{schneller} below, this yields a much faster algorithm to enumerate this genus 
than the usual neighboring algorithm.

\begin{algorithm}\label{algo}
  Given a totally definite quaternion algebra $Q$ over $K$ and 
  a fractional ideal $\fraka$ of $\Z_K$, the following algorithm returns a system of representatives of the proper isometry classes in ${\mathcal G}_{\fraka} (Q, n)$.
  \begin{enumerate}
    \item Compute a maximal order $\mathcal{M}$ in $Q$ using Zassenhaus' Round 2 algorithm \cite{Round2} or Voight's specialised algorithm \cite[Algorithm 7.10]{Voight_IMR}. 
    \item Using \cite[Algorithm 7.10]{KirschmerVoight} compute:
      \begin{enumerate}
	\item A system of representatives $(\mathcal{M}_1,\dots,\mathcal{M}_t)$ of the conjugacy classes of maximal orders in $Q$.
	\item A system of representatives $(I_1,\dots,I_h)$ of all invertible right ideals of $\mathcal{M}$ up to left equivalence.
      \end{enumerate}
    \item For $1\le i \le t$ set $S_i := \{ I_j \mathcal{M}_i \mid 1 \le j \le h \mbox{ and } [n(I_j \mathcal{M}_i)] =  [\fraka] \}$.
    \item If $g \in {N}(\mathcal{M}_i) $ and $I \in S_i$ then there exists a unique lattice $J \in S_i$ such that $J$ is left equivalent to $I g^{-1}$.
      This yields  an action of the normaliser ${N}(\mathcal{M}_i)$ on $S_i $.
      For $1 \le i \le t$ compute a system of orbit representatives $T_i$ of this action.
    \item For $J \in \bigcup_i T_{i}$ fix some totally positive generator 
	$a_J$ of $n(J)^{-1}\fraka$ and compute some $x_J \in Q^*$ such that $n(x_J) = a_J$.
    \item For $u \in \Z_{K, >0}^*/(\Z_K^*)^2$ compute some $\alpha_u \in Q^*$ such that $n(\alpha_u) \in u$.
    \item Return $\{ (\alpha_u x_J J, n) \mid J \in \bigcup_i T_{i} \mbox{ and } u \in \Z_{K,>0}^*/U(J) \}$.
  \end{enumerate}
\end{algorithm}
	
\begin{proof}
  We only need to show that the output of the algorithm is correct.
  The set $\{ I_i \mathcal{M}_j \mid 1 \le i \le h \}$ is a system of representatives of the left equivalence classes of all invertible right ideals of $\mathcal{M}_j$.
  Thus Proposition \ref{7.1} shows that $\bigcup_i T_i$ is a system of representatives of the two sided equivalence classes of all normal lattices in $(V,q)$ of type $[\fraka]$.
  For any lattice $J \in T_i$, the class $[n(J)^{-1} \fraka] $ is trivial. Hence the scalar $a_J$ exists.
  The existence of the elements $x_J$ and $\alpha_u$ follows from the Theorem of Hasse-Schilling-Maass.
  Then $x_J J$ is $\fraka$-normalised. 
  Proposition \ref{latJ} shows that the proper isometry classes of lattices $(I,n)$ with $I\in C_{\fraka}(x_J J)$ is given by
            $$\Lat(x_J J) = \{ (\alpha_u x_J J, n) \mid u\in \Z_{K,>0}^*/U(J) \}  \:. $$
  Hence the set computed in (7) 
  is a system of representatives of the proper isometry classes of all $\fraka$-maximal lattices in $(Q, n)$.
\end{proof}

\begin{remark}We give some remarks concerning the last three steps in the previous algorithm.
\begin{enumerate}
\item
Let  $J,J' \in T_{i}$. Proposition \ref{latJ} shows that $U(J) = U(J')$ whenever the left orders of $J$ and $J'$ are conjugate.
This can be used to speed up the last step of the algorithm.
\item
  The norms of the ideals $J \in \bigcup_i T_i$ will only be supported by very few prime ideals.
  So for the computation of $x_J$ and $\alpha_u$ in steps (5) and (6) one only has to solve very few norm equations of the form
    \[ n(x) = a \quad \mbox{ with  $a \in K_{>0}$.} \]
    The Theorem of Hasse-Schilling-Maass (or the Hasse principle for quadratic forms) shows that any such norm equation has a solution $x \in Q^*$ and it gives rise to the isotropic vector $(1,x)$ of the quintic quadratic space $\langle -a \rangle \perp (Q, n)$.
  This is how it such a solution $x \in Q^*$ can be found.
\item
  Note that left multiplication with $(\alpha _u x_J)^{-1}$ gives an isometry between $ (\alpha _u x_J J,n)$ and $(J, u_0 a_J n ) $ where $u_0 = n(\alpha_u)$.
  So for most applications, it is not necessary to compute the elements $x_J$ and $\alpha_u$ in steps (5) and (6).
\end{enumerate}
\end{remark}

\begin{remark}\label{schneller}
	The computation of a system of representatives of the proper isometry classes in ${\mathcal G}_{\fraka }(Q,n) $ using Algorithm \ref{algo} 
	is much faster than using Kneser's neighbour method \cite{ScharlauHemkemeier} directly.
There are mainly two reasons for this.
\begin{enumerate}
\item
%
Let $s$ be the number of finite places of $K$ which ramify in $Q$ and suppose that $K$ has narrow class number $1$.
Section 4 of \cite{EichlerZT} shows that $H(\mathcal{M}_i) \leq 2^s$ and $t \leq h = \sum_{i= 1}^t H(\mathcal{M}_i) \le 2^s t$.
Moreover, $\lvert {N}(\mathcal{M}_i) / \mathcal{M}_i^* K^* \rvert \le 2^s$.
By Algorithm \ref{algo} the number of proper isometry classes in  ${\mathcal G}_{\fraka }(Q,n) $ is at least  $ht/2^s \ge h^2/2^{2s}$.
So using Kneser's method directly requires to enumerate way more lattices than the enumeration of the $h$ ideal classes in $\mathcal{M}$.
\item 
The bottleneck of Kneser's method is the computation of many isometries between $\Z_K$-lattices.
The computation of such an isometry is usually done by computing a suitable isometry of the corresponding trace lattices, see for example \cite[Remark 2.4.4]{Kirschmerhabil}.
Since the trace lattices have rank $4[K:\Q]$, this method is limited to $[K:\Q]$ being small.
\\
Computing a system of representatives for the right ideal classes of $\mathcal{M}$ does not require the computations of isometries, since isomorphism tests for normal ideals amount only to show that 
a certain $\Z$-lattice has minimum $\leq [K:\Q ]=\Tr(1)$, 
see \cite[Algorithm 6.3]{KirschmerVoight}. The test 
for a lattice minimum is much faster than the computation of an isometry.
\end{enumerate}
\end{remark}

\begin{example} \label{q15}
Unimodular lattices over $\Z[\sqrt{15}] $.
 As an example we take $K=\Q[\sqrt{15}]$. 
 Then $\Z_K = \Z[\sqrt{15} ]$, $h_K=2$, $h_K^+ = 4$. 
 The narrow class group of $K$ is represented by
 $$\{ [ \Z_K ] , [ (3,\sqrt{15} ) = \frakp _3 ] , [(5,\sqrt{15}) = \frakp _5 ] ,
 [(\sqrt{15} ) = \frakp_3\frakp_5 ] \}$$
 and the fundamental unit $\epsilon = 4+\sqrt{15}$ of $\Z_K$ is totally 
 positive. 

 We take $Q = \left( \frac{-1,-1}{K} \right) $ to be the quaternion algebra
 over $K$ ramified only at the two infinite places. 
 With the algorithm from \cite{KirschmerVoight} that is implemented in 
 Magma~\cite{Magma} we compute that $Q$ has 8 maximal orders 
each of class number 8. 
We list these maximal orders ${\mathcal M}_i$ $(1\leq i \leq 8)$  by giving  the structure of their unit group:

 $$\begin{array}{|c|c|c|c|c|} 
	 \hline
	 i &   {\mathcal M}_i^*/\Z_K^* &  {\mathcal M}_i^{(1)}/\{\pm 1\}  
	 & n(\mathcal{M}_i^*) / (\Z_K^*)^2 & 
	  n(N_i) / (\Z_K^*)^2   \\
	 \hline
 1 & C_2\times C_2 & C_2\times C_2 & 1 & \langle 2 \rangle  \\
 2 & C_2\times C_2 & C_2 & \langle \epsilon \rangle  & \langle 2 ,2\epsilon   \rangle  \\
 3 & A_4 & A_4 & 1 & \langle 2 \rangle  \\
 4 & C_2 & 1 & \langle \epsilon \rangle & \langle 2 ,2\epsilon   \rangle  \\
 5 & S_3& C_3 & \langle \epsilon \rangle  & \langle 2, 2\epsilon  \rangle  \\
 6 & S_3 & S_3 & 1 & \langle 2\epsilon \rangle  \\
 7 & C_2\times C_2 &  C_2 & \langle \epsilon \rangle  & \langle 2,2\epsilon \rangle  \\
 8 & C_3& C_3 & 1 & \langle 2\epsilon \rangle  \\ \hline
  \end{array}
  $$
From this information we get 
$$\Pi _i = \langle (2) \rangle , f_i  = f_{ij} = 1 \mbox{ for all } i,j $$ 
and $z_{ij} =1$ if $\{i,j\} \in \{ \{1,1\},\{3,3\}, \{1,3\} ,\{6,6\},\{8,8\},\{6,8\}\} $ and
$z_{ij} =0$ in all other cases.
We compute that 
$$[ n({\mathcal M}_1 {\mathcal M}_j ) ] =
\left\{ \begin{array}{ccc} [\Z_K ] & \mbox{ for }  & j\in \{1,7\} \\
 \ [\frakp_3 ] & \mbox{ for }  & j\in \{4\} \\ 
 \ [\frakp_5 ] & \mbox{ for }  & j\in \{2,3,6\} \\ 
\ [\frakp_3 \frakp_5 ] & \mbox{ for }  & j\in \{5,8 \} . \end{array} \right.
$$
As the class number is equal to the type number, all 
normal ideals are equivalent to ${\mathcal M}_i {\mathcal M}_{j}  $ 
for some $1\leq i,j\leq 8 $. Moreover 
$ [ n({\mathcal M}_i {\mathcal M}_{j} ) ] = 
[n({\mathcal M}_1 {\mathcal M}_i )]  \cdot  [n({\mathcal M}_1 {\mathcal M}_j )] $ can be computed from the information above. 
Using the information on $z_{ij}$ given before, Proposition \ref{latJ} 
now allows to deduce the number 
 of proper isometry
classes of $\Z_K$-lattices in each of the four genera as listed in the
next table. The columns are headed by a set of  indices $i$ whereas the
entries in the table give the set of values of $j$ such that 
$ n({\mathcal M}_i {\mathcal M}_{j}) $ lies in the 
narrow ideal class of the respective row. 
The entries below the $\# $ gives the number of proper isometry classes
of lattices obtained by these values $(i,j)$. Summing up these
entries in each row gives the proper class number $h^+$ of the genus as 
displayed in the first column of the table: 

$$\begin{array}{|ccr|cc|cc|cc|cc|} 
	\hline
	h^+  &  \fraka &   &  \{ 1,7 \} & \# & \{ 2,3,6 \} & \# & \{ 4 \} & \# & \{ 5,8 \}& \#  \\ 
	\hline
	22 & \Z_K  & &  \{ 1,7 \} & 5 & \{ 2,3,6 \} & 11 & \{ 4 \} & 1 & \{ 5,8 \}  & 5 \\ 
	\hline
	18 & \frakp _3^{-1} & & \{ 4 \} & 2 & \{ 5,8 \} & 7 & \{ 1,7 \} & 2 & \{ 2,3,6 \} & 7 \\
	\hline 
	18 & \frakp_5 ^{-1} & & \{ 2,3,6 \} & 7 & \{ 1,7  \}& 7 & \{ 5,8 \} & 2 & \{ 4 \} & 2 \\
	\hline 
	14 & \frakp_{15} ^{-1} & & \{ 5,8 \} & 4 & \{ 4 \} & 3 & \{ 2,3,6 \} & 3 & \{ 1,7 \} & 4 \\
	\hline 
\end{array} 
$$

For the four genera considered above, the trace lattices lie in the genera of 
even 15-modular (+ type) (see \cite{SchaSchuPi} for basic facts on modular lattices),
5-modular, 3-modular resp. unimodular lattices 
of dimension 8. Of course the latter 14 lattices are as $\Z $-lattices all
isometric to the $\E _8$-lattice, the unique positive definite even unimodular $\Z $-lattice of dimension 8. 
One finds 2 extremal 15-modular lattices (minimum $6$ as $\Z $-lattices): 
$({\mathcal M}_3, \epsilon n) $ and 
$({\mathcal M}_3 {\mathcal M}_6 , n)  \cong 
({\mathcal M}_6 {\mathcal M}_3 , n)  $. 
There is a unique extremal even 5-modular lattice of dimension 8 
(minimum 4 as $\Z $-lattice), 
so all the  $\Z $-trace lattices in ${\mathcal G}_{\frakp _3^{-1}} (Q,n)$ 
of minimum 4 are isometric to this lattice. 
These are 
$({\mathcal M}_i{\mathcal M}_j ,n )$ for 
$\{i,j\}  = \{ 2,5\} ,\{3,8 \}$ or 
$({\mathcal M}_i{\mathcal M}_j ,\epsilon n )$ for 
$\{i,j\}  = \{ 6,8 \}$.
\end{example}

\begin{example}\label{larger} 
  Let $K = \Q(\zeta_{19} + \zeta_{19}^{-1})$ be the totally real subfield of the cyclotomic field $\Q(\zeta_{19})$.
  Then there exists a unique prime ideal $\frakp$ of $\Z_K$ over $19$ and the narrow class group $\CL^+(K)$ is trivial.
  Let $Q = \left( \frac{-1, -19}{K} \right)$ be the quaternion algebra over $K$ ramified only at the infinite places and at $\frakp$.
  We implemented Algorithm \ref{algo} in Magma and applied it to compute the maximal integral $\Z_K$-lattices in $(Q, n)$.
  The timings below were done on an Intel Core i7 7700K.

  The computation of a maximal order in $Q$ took less than a second.
  For the second step, \cite[Algorithm 7.10]{KirschmerVoight} took about 9 minutes.
  It turns out that there are $t=185$ conjugacy classes of maximal orders in $Q$.
  Each of them has $h=356$ left equivalence classes of right ideals.
  The computation of the products in step (3) took $12$ minutes and it took another $50$ minutes to enumerate the orbit representatives in step (4).
  It turns out that $\bigcup_i T_i$ consists of $63466$ lattices.
  Since $\Z_{K, >0}^* = (\Z_K^*)^2$, we can always choose $u=1$ in the last step of the algorithm.
  So there are $63466$ proper isometry classes of  maximal integral $\Z_K$-lattices in $(Q, n)$. 
  The complete enumeration only took $71$ minutes.
  Enumerating such a large genus with Kneser's neighbour method would take several days.
\end{example}

\bibliography{quaternary}

\end{document}